\newcommand{\vect}[1]{\textbf{#1}}
\newcommand{\set}[1]{\left\lbrace #1 \right\rbrace}
\newcommand{\mindeg}[1]{{\text{mindeg } #1}}
\newcommand{\Hom}{\operatorname{Hom}}
\newcommand{\trop}{\operatorname{trop}}
\newcommand{\spec}{\operatorname{Spec}}
\newcommand{\Bl}{\operatorname{Bl}}
\newcommand{\Gl}{\operatorname{Gl}}
\newcommand{\Sl}{\operatorname{Sl}}
\newcommand{\val}{\operatorname{val}}
\newcommand{\init}{\operatorname{in}}
\newcommand{\gr}{\operatorname{gr}}
\newtheorem{theorem}{Theorem}[section]
\newtheorem{corollary}[theorem]{Corollary}
\theoremstyle{definition}
\newtheorem{example}[theorem]{Example}
\newtheorem{definition}[theorem]{Definition}
\title{Tropicalizing Spherical Embeddings}
\author{Evan D. Nash}
\address{Department of Mathematics, The Ohio State University, 231 W 18th Ave, Columbus OH, 43210}
\email{nash.228@osu.edu}
\begin{document}
\begin{abstract}
Recently, a theory for the tropicalization of a spherical homogeneous space $G/H$ was developed by Tassos Vogiannou. We extend his ideas to define the tropicalization of a spherical $G/H$-embedding. This generalizes the construction of tropicalization of toric varieties.
We also extend an alternate definition of spherical tropicalization via Gr\"obner theory developed by Kiumars Kaveh and Christopher Manon and show that the two notions of extended tropicalization coincide.
\end{abstract}
\maketitle
\small
\section*{Introduction}\label{intro}

Marrying algebraic geometric ideas and combinatorics is an active area of research. In recent years, the notion of tropicalization has proven to be a fruitful such tool to answer questions in algebraic geometry. Particularly, toric varieties have benefited from tropical geometric methods meshing with their inherent combinatorial structure. See for example Chapter 6 of \cite{MS} for an introduction to some examples of the utility of tropical ideas in the toric world. Toric varieties are examples of spherical varieties, which encompass a wider class of algebraic objects, among them flag varieties and symmetric varieties.
Spherical varieties also have combinatorial structure in the form of colored fans, which directly generalize the well-known polyhedral fans of toric geometry.
This theory was developed by Luna and Vust \cite{LV} in 1983.
It is a natural idea to take advantage of the similar combinatorial structure and extend the theory of tropicalization from toric varieties to the more general case.

The first steps in this direction were taken by Tassos Vogiannou in his thesis \cite{Vo}.
Among other results, Vogiannou developed a definition for the tropicalization of subvarieties of a spherical homogeneous space, which is the analogue of the dense torus orbit present in a toric variety.
In a forthcoming paper, Kiumars Kaveh and Christopher Manon extend Vogiannou's work by defining a theory of Gr\"obner bases on spherical varieties and showing that their definition agrees with Vogiannou's via a spherical fundamental theorem. 
They further consider a notion of spherical amoebas and show, with an additional assumption, that this amoeba approaches the tropicalization.

The purpose of this note is to define the tropicalization of a general spherical embedding.
Our blueprint for this construction appeared separately in \cite{Ka} and \cite{Pay}.
These papers define the tropicalization of a toric variety by extending the tropicalization of its dense torus.
We will mimic their ideas using the shared polyhedral fan structure of toric and spherical varieties.

We also describe how the Kaveh-Manon method of tropicalization that uses Gr\"obner theory can be extended to spherical embeddings.
Finally, we show that the two notions of extended tropicalization coincide.

The layout of this article is as follows. In \textsection \ref{sphvar}, we review the basic theory of spherical varieties, and \textsection \ref{HomSpaces} describes Vogiannou's definition of tropicalizing spherical homogeneous spaces.
Our construction of the tropicalization of a spherical embedding is explained in \textsection\ref{construction}, and \textsection\ref{examples} contains examples.
In \textsection \ref{FTTG}, we recall the Fundamental Theorem of Tropical Geometry from the toric case and its extension to toric varieties.
Then \textsection \ref{Grobtrop} discusses Gr\"obner tropicalization and \textsection \ref{extGrobtrop} proves an extended fundamental theorem.

\subsection*{Acknowledgments} Much of this work was completed at the Fields Institute in the University of Toronto. The author is indebted to Gary Kennedy for suggesting this area of research and for providing guidance and discussions throughout its development. This paper also benefited from conversations with Kiumars Kaveh, Christopher Manon, and Jenia Tevelev.

\section{Spherical Varieties}\label{sphvar}

There are a number of surveys on spherical varieties and their combinatorial structure.
Refer for example to \cite{LV}, \cite{Kn}, \cite{Pas}, or \cite{Pe} for more details on the theory discussed in this section.
There is occasionally a clash between symbols usually used for toric varieties and their analogs in spherical varieties; whenever possible we have favored the toric conventions as these are more widely known.
We work throughout over an algebraically closed field $k$.
Let $G$ be a connected reductive group with a Borel subgroup $B$. Let $H \leq G$ be a closed subgroup such that the action of $B$ on $G/H$ via the left action of $G$ has an open orbit.
In this case, we call $G/H$ a \emph{spherical homogeneous space}.
A normal $G$-variety $X$ that contains $G/H$ as an open orbit of the action of $G$ is called a \emph{spherical embedding}.

Let $\mathcal{X}$ denote the group of characters $B \rightarrow k^*$.
We consider the $B$ semi-invariant rational functions on $G/H$:
\[
k(G/H)^{(B)} := \set{f \in k(G/H)^\ast \mid \text{there exists } \chi_f \in \mathcal{X} \text{ such that } gf = \chi_f(g)f \text{ for all } g \in B }.
\]
Here, the action of the Borel subgroup on $k(G/H)$ is given by $gf(x) = f(g^{-1}x)$, so $gf$ is only defined on those $x$ such that $g^{-1}x$ is in the domain of $f$.
This affords us a homomorphism $k(G/H)^{(B)} \rightarrow \mathcal{X}$ defined by $f \mapsto \chi_f$.
Further, the kernel of this map is the set of nonzero constant functions, which we write as $k^\ast$.
Then denote by $\mathcal{M}$ or $\mathcal{M}(G/H)$ the image of $k(G/H)^{(B)}/k^\ast$ in $\mathcal{X}$.
The lattice $\mathcal{M}$ is finitely generated and free, so we obtain a vector space $\mathcal{N}_\mathbb{Q}(G/H) := \text{Hom}(\mathcal{M}, \mathbb{Q}) \cong \mathbb{Q}^m$, simply denoted $\mathcal{N}_\mathbb{Q}$ when the underlying homogeneous space is clear.
The integer $m$ is called the \emph{rank} of the $G/H$-embedding.

We must further define the valuation cone, which will lie inside $\mathcal{N}_\mathbb{Q}$. We consider $G$-invariant $\mathbb{Q}$-valuations $k(G/H) \rightarrow \mathbb{Q}$ which are trivial on $k^\ast$. 
By restricting such a valuation to $k(G/H)^{(B)}$, we obtain an induced map $k(G/H)^{(B)}/k^\ast \rightarrow \mathbb{Q}$, so we can identify it with a point in $\mathcal{N}_\mathbb{Q}$.
This identification yields a bijection between the set of such $G$-invariant valuations and a rational convex cone in $\mathcal{N}_\mathbb{Q}$ which we call the \emph{valuation cone}, denoted by $\mathcal{V}(G/H)$ or $\mathcal{V}$.
We write $\mathcal{D}(G/H)$ or $\mathcal{D}$ for the (finite) set of $B$-stable prime divisors in $G/H$. We refer to $\mathcal{D}(G/H)$ as the \emph{palette} of $G/H$ and call its elements \emph{colors}.
Every color $D$ induces a valuation $\nu_D$ on $k(G/H)$ given by a function's order of vanishing along the divisor.
We write $\rho$ for the map defined by $D \mapsto \nu_D$ and note that $\rho$ need not be injective.

To recap, given a spherical homogeneous space $G/H$, we associate a vector space $\mathcal{N}_\mathbb{Q}$ containing the valuation cone $\mathcal{V}$, the (finite) palette $\mathcal{D}$, and a map $\rho: \mathcal{D} \rightarrow \mathcal{N}_\mathbb{Q}$.
\begin{definition}
Let $\sigma \subseteq \mathcal{N}_\mathbb{Q}$ be a cone and $\mathcal{F} \subseteq \mathcal{D}$. We call the pair $(\sigma,\mathcal{F})$ a \emph{colored cone} if the following properties are satisfied:
\begin{enumerate}
\item $\sigma$ is generated by $\rho(\mathcal{F})$ and finitely many elements of $\mathcal{V}$;
\item $\text{int}(\sigma) \cap \mathcal{V} \neq \emptyset$.
\end{enumerate}
We say that $(\sigma,\mathcal{F})$ is \emph{strictly convex} if in addition $\sigma$ is a strictly convex cone and $0 \notin \rho(\mathcal{F})$.
\end{definition}
\begin{definition}
A colored cone $(\tau,\mathcal{F}')$ is a  \emph{(colored) face} of a colored cone $(\sigma,\mathcal{F})$ if $\tau$ is a face of $\sigma$ satisfying $\text{int}(\tau) \cap \mathcal{V} \neq \emptyset$ such that $\mathcal{F}' = \mathcal{F} \cap \rho^{-1}(\tau)$. In this case we write $(\tau,\mathcal{F}') \preceq (\sigma,\mathcal{F})$ or $\tau \preceq \sigma$ if the colors are understood.
\end{definition}
\begin{definition}
A \emph{colored fan} is a finite collection $\Sigma$ of colored cones such that the following hold:
\begin{enumerate}
\item If $(\sigma,\mathcal{F}) \in \Sigma$ is a colored cone and $(\tau,\mathcal{F}')$ is a face of $(\sigma,\mathcal{F})$, then $(\tau,\mathcal{F}') \in \Sigma$.
\item Every $v \in \mathcal{V}$ is in the interior of at most one colored cone in $\Sigma$.
\end{enumerate}
We say in addition that $\Sigma$ is \emph{strictly convex} if each of its colored cones is strictly convex.
\end{definition}

We emphasize that in this definition the second condition only applies to points of $\mathcal{V}$, so that colored cones of the colored fan are allowed to overlap nontrivially outside the valuation cone.  
With these definitions in hand, we can describe the colored fan associated to a $G/H$-embedding $X$.
Note that if $D$ is a $B$-stable prime divisor on $X$ that is not $G$-stable, then the intersection $D \cap G/H$ is a color of $G/H$. Conversely, the closure in $X$ of a color is a $B$-stable prime divisor $D$ on $X$ that is not $G$-stable. Thus we can identify the palette with the set of all such divisors.
For a closed $G$-orbit $\mathcal{O}$ of $X$, let $\mathcal{F} \subseteq \mathcal{D}$ consist of the $B$-stable prime divisors containing $\mathcal{O}$ that are not $G$-stable.
The colored cone $(\sigma,\mathcal{F})$ associated to $\mathcal{O}$ is spanned by $\rho(\mathcal{F})$ and the set of $G$-stable prime divisors containing $\mathcal{O}$.
Taking these colored cones over every $G$-orbit of $X$, we obtain a colored fan.
If a $G/H$-embedding $X$ has a single closed $G$-orbit and hence a single maximal colored cone, we call $X$ \emph{simple}.
A $G/H$-embedding consists of some finite number of simple embeddings glued together along $G$-orbits; this structure is reflected in the polyhedral geometry of the colored fan.
\begin{theorem}
[\cite{LV} Prop. 8.10, \cite{Kn} Thm. 3.3]
There is a bijection between simple $G/H$-embeddings and strictly convex colored cones in $\mathcal{N}_\mathbb{Q}$, and there is a bijection between $G/H$-embeddings and strictly convex colored fans in $\mathcal{N}_\mathbb{Q}$.
\end{theorem}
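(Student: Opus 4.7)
The plan is to follow the strategy of Luna--Vust, reducing to the simple case and then gluing. I would break the proof into four stages.

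First, in one direction of the simple case, given a simple $G/H$-embedding $X$ with unique closed orbit $Y$, I would attach the colored cone $(\sigma,\mathcal{F})$ described in the text: $\mathcal{F}$ consists of the colors whose closures in $X$ contain $Y$, and $\sigma$ is generated by $\rho(\mathcal{F})$ together with the valuations $\nu_E$ of the $G$-stable prime divisors $E \subseteq X$ containing $Y$. The axiom $\mathrm{int}(\sigma) \cap \mathcal{V} \neq \emptyset$ follows because the generic valuation of $Y$ itself is $G$-invariant and lies in the interior of $\sigma$; strict convexity of $\sigma$ is the polyhedral shadow of separatedness of $X$ (two distinct orbits cannot share a limiting valuation); and $0 \notin \rho(\mathcal{F})$ holds since the closure of a color containing $Y$ carries a nontrivial $B$-semi-invariant equation.

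Second, I would show injectivity: two simple embeddings with the same colored cone are $G$-equivariantly isomorphic. The tool is Sumihiro's theorem, which produces a $B$-stable affine open neighborhood $X_0$ of $Y$; one then checks that the coordinate ring $k[X_0]$ is recovered from $(\sigma,\mathcal{F})$ as the subring of $k(G/H)$ generated by the $B$-semi-invariants whose character lies in $\sigma^\vee \cap \mathcal{M}$ together with those regular functions on $G/H$ whose pole divisor is supported on $\mathcal{D} \setminus \mathcal{F}$. Saturating $X_0$ under $G$ recovers $X$.

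Third, for surjectivity in the simple case, I would start from a strictly convex colored cone $(\sigma,\mathcal{F})$, build the algebra $A(\sigma,\mathcal{F})$ as above, form $X_0 = \mathrm{Spec}\, A(\sigma,\mathcal{F})$, and then define $X$ as the $G$-saturation $G \cdot X_0$ inside a suitable ambient space (e.g.\ an equivariant completion of $G/H$). Normality and the existence of a unique closed $G$-orbit with the prescribed colored cone are the main technical checks, handled via the local structure theorem for spherical varieties and Luna's étale slice theorem.

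Fourth, for the fan statement, given a $G/H$-embedding $X$ I would stratify by closed $G$-orbits, extract the open simple subembedding $X_Y \subseteq X$ for each closed orbit $Y$, and assemble the colored cones $(\sigma_Y,\mathcal{F}_Y)$ into a collection $\Sigma$. Face relations correspond to orbit closure inclusions $\overline{Y'} \supseteq Y$, and axiom (2) encodes that each $G$-invariant valuation admits a unique center in $X$, a consequence of separatedness. Conversely, given a colored fan $\Sigma$, I would construct the simple embeddings $X_{(\sigma,\mathcal{F})}$ from stage three and glue them along the open sub-embeddings corresponding to common faces; the fan axioms guarantee the cocycle condition and separatedness of the glued variety.

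The main obstacle is the surjectivity in stage three: producing a normal simple spherical embedding with prescribed colored cone, which genuinely requires the local structure theorem and Luna's slice machinery and is where the bulk of \cite{LV} and \cite{Kn} is expended. The remaining stages are essentially formal once this affine local model is in hand.
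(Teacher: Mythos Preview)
The paper does not supply its own proof of this theorem; it is stated purely as a citation of \cite{LV} Proposition 8.10 and \cite{Kn} Theorem 3.3, and is used as background input for the rest of the article. There is therefore nothing in the paper to compare your proposal against.

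That said, your outline is a faithful sketch of the Luna--Vust/Knop argument referenced, so in that sense it aligns with what the paper is invoking. A few small remarks: in stage three the appeal to Luna's \'etale slice theorem is not really what drives the construction in \cite{Kn}; the local structure theorem (producing the $B$-stable affine chart $X_0$ and a Levi decomposition of its stabilizer) together with finite generation of the candidate algebra $A(\sigma,\mathcal{F})$ is what is actually used, and normality is checked directly from the description of $A(\sigma,\mathcal{F})$ as an intersection of discrete valuation rings. Also, in stage two your description of $k[X_0]$ is slightly off: it is not generated by $B$-semi-invariants with character in $\sigma^\vee \cap \mathcal{M}$ together with a second class of functions, but is rather characterized as the set of $f \in k(G/H)$ regular on the open $B$-orbit and with $\nu_D(f) \geq 0$ for every $D \in \mathcal{D}_\mathcal{O}(X)$, which is the formulation the present paper uses later in Theorem \ref{orbitregfunctions}. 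These are refinements rather than gaps; the overall architecture you describe is the standard one.
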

\begin{example}\label{EX}
Let $G = \Sl_2$ with $B$ the subgroup of upper triangular matrices and
\[
H = \set{M \in G \mid M \text{ is upper triangular with 1's on the diagonal}}.
\]
Then $G/H = \mathbb{A}^2 \setminus \set{0}$ where the action of $G$ is given by matrix multiplication of a column vector $[x \; y]^T$.
Every character $B \rightarrow k^\ast$ is of the form
\[
\chi_n: \left( \begin{array}{cc}
a & b \\
0 & a^{-1}
\end{array} \right)
\mapsto a^n
\]
for some $n \in \mathbb{Z}$, so $\mathcal{X} \cong \mathbb{Z}$.
%Now observe that $k(G/H) \cong k(x,y)$, with the action of $(g_{ij}) \in G$ given by $(g_{ij})x = g_{11}x + g_{12}y$ and $(g_{ij})y = g_{21}x + g_{22}y$.
Under the prescribed action of $G$, we can see that $k(G/H)^{(B)}/k^* = \set{y^n \mid n \in \mathbb{Z}}$ and that the character associated to $y^n$ is $\chi_n$.
It follows that $\mathcal{M} \cong \mathbb{Z}$ and hence $\mathcal{N}_\mathbb{Q} \cong \mathbb{Q}$.

We now turn to the valuation cone $\mathcal{V}$.
Consider the following two valuations of $k(G/H)$, which are $G$-invariant:
\[
\frac{f}{g} \mapsto \mindeg{f} - \mindeg{g} \qquad \qquad \frac{f}{g} \mapsto \deg{g} - \deg{f}
\]
Here, mindeg denotes the minimum degree of a monomial in a polynomial in $k[x,y]$.
After restricting to $k(G/H)^{(B)}$, we see that the valuation on the left corresponds to sending $y^m \mapsto m$ (i.e. $\chi_1^\ast$)  
and the one on the right to $y^m \mapsto -m$ (i.e. $\chi_{-1}^\ast$).
Thus, positive multiples of these valuations induce every possible element of $\mathcal{N}_\mathbb{Q} = \text{Hom}(\mathcal{M}, \mathbb{Q})$ and so $\mathcal{V} = \mathcal{N}_\mathbb{Q}$.

The only closed $B$-orbit contained in $G/H$ is the divisor $D := V(y)$, so the palette $\mathcal{D}$ consists solely of $D$.
This means that an embedding of $G/H$ can have at most one color, corresponding to the divisor where $y$ vanishes.
This divisor gives the ray spanned by $\chi_1^\ast$.

We'll finish by explicitly computing the fan associated to $\mathbb{P}^2$ with homogeneous coordinates $W$, $X$, and $Y$. 
We can realize $\mathbb{P}^2$ as an embedding of $\mathbb{A}^2 \setminus \set{0}$ via $[x \; y]^T \mapsto [1 : x : y]$.
There are three $G$-orbits in $\mathbb{P}^2$:
\begin{align*}
\mathbb{A}^2 \setminus \set{0} & := \set{[1 : x : y] \mid x,y \in k \text{ not both zero}} \\
V(W) & := \set{[0 : x : y] \mid x,y \in k \text{ not both zero}} \\
O & := \set{[1 : 0 : 0]}.
\end{align*}
The latter two orbits are closed, so our fan will have two maximal cones. 
The orbit $V(W)$ is itself a B-stable prime divisor.
This divisor is $G$-stable, so we will have a cone without color.
The function $y$ in $k(\mathbb{A}^2 \setminus \set{0})$ can be written as $Y/W$ on $\mathbb{P}^2$.
On $V(W)$, $Y/W$ has a pole of order $1$, so the cone associated to this orbit is the ray spanned by $\chi_{-1}^\ast$.
The other closed orbit $O$ is contained in one $B$-stable prime divisor as well: $V(Y)$.
This divisor is not $G$-stable, so the corresponding ray will have color.
Clearly $y$ vanishes with order 1 on $V(Y)$, so this will give the cone spanned by $\chi_1^\ast$.
This example is drawn in Table \ref{table} along with the other colored fans of $\mathbb{A}^2 \setminus \set{0}$.
This table also appears in \cite{Vo} except for the final column, which will be explained in Example \ref{A2}. Note how the color is indicated by a bullseye.
\begin{table}[h]
\centering
\begin{tabular}{| c | c | c | c |}
\hline
Variety & Closed $G$-orbits & Colored Fan & Tropicalization \\
\hline
$\mathbb{A}^2 \setminus \set{0}$ & $\mathbb{A}^2 \setminus \set{0}$ & \begin{tikzpicture}
\draw[->,white] (0,0)--(1,0);
\draw[->,white] (0,0)--(-1,0);
\draw[fill] (0,0) circle(.05);
%\draw[fill] (.5,0) circle(.05);
\end{tikzpicture} &
\begin{tikzpicture}
\draw[white,fill = white] (1,0) circle(.05);
\draw[white,fill = white] (-1,0) circle(.05);
\draw (-1,0)--(1,0);
\end{tikzpicture} \\

$\mathbb{A}^2$ & $\set{0}$ & \begin{tikzpicture}
\draw[red,->] (0,0)--(1,0);
\draw[->,white] (0,0)--(-1,0);
\draw[fill] (0,0) circle(.05);
\draw[red, fill= white] (.5,0) circle(.1);
\draw[red,fill= red] (.5,0) circle(.05);
\end{tikzpicture} &
\begin{tikzpicture}
\draw[white,fill = white] (-1,0) circle(.05);
\draw (-1,0)--(1,0);
\draw[red,fill = white] (1,0) circle(.1);
\draw[red,fill = red] (1,0) circle(.05);
\end{tikzpicture} \\

$\Bl_0(\mathbb{A}^2)$ & $E$ & \begin{tikzpicture}
\draw[->] (0,0)--(1,0);
\draw[->,white] (0,0)--(-1,0);
\draw[fill] (0,0) circle(.05);
%\draw[fill] (.5,0) circle(.05);
\end{tikzpicture} &
\begin{tikzpicture}
\draw[white] (1,0) circle(.1);
\draw[fill] (1,0) circle(.05);
\draw[white,fill = white] (-1,0) circle(.05);
\draw (-1,0)--(1,0);
\end{tikzpicture} \\

$\mathbb{P}^2 \setminus \set{0}$ & $V(W)$ & \begin{tikzpicture}
\draw[->,white] (0,0)--(1,0);
\draw[->] (0,0)--(-1,0);
\draw[fill] (0,0) circle(.05);
%\draw[fill] (.5,0) circle(.05);
\end{tikzpicture} & 
\begin{tikzpicture}
\draw[white] (-1,0) circle(.1);
\draw[fill] (-1,0) circle(.05);
\draw[white,fill = white] (1,0) circle(.05);
\draw (-1,0)--(1,0);
\end{tikzpicture} \\

$\mathbb{P}^2$ & $V(W)$, $\set{0}$ & \begin{tikzpicture}
\draw[red,->] (0,0)--(1,0);
\draw[->] (0,0)--(-1,0);
\draw[fill] (0,0) circle(.05);
\draw[red, fill= white] (.5,0) circle(.1);
\draw[red,fill= red] (.5,0) circle(.05);
\end{tikzpicture} & 
\begin{tikzpicture}
\draw[fill] (-1,0) circle(.05);
\draw (-1,0)--(1,0);
\draw[red,fill = white] (1,0) circle(.1);
\draw[red,fill = red] (1,0) circle(.05);
\end{tikzpicture} \\

$\Bl_0(\mathbb{P}^2)$ & $V(W)$, $E$ & \begin{tikzpicture}
\draw[->] (0,0)--(1,0);
\draw[->] (0,0)--(-1,0);
\draw[fill] (0,0) circle(.05);
%\draw[fill] (.5,0) circle(.05);
\end{tikzpicture} & 
\begin{tikzpicture}
\draw[white] (1,0) circle(.1);
\draw[fill] (1,0) circle(.05);
\draw[fill] (-1,0) circle(.05);
\draw (-1,0)--(1,0);
\end{tikzpicture} \\
\hline
\end{tabular}
\caption{Colored fans and colored tropicalizations associated to $\mathbb{A}^2 \setminus \set{0}$. The $E$ denotes the exceptional divisor of the blowup.}
\label{table}
\end{table}
\end{example}

\section{Tropicalizing Homogeneous Spaces}\label{HomSpaces}

In his thesis \cite{Vo}, Tassos Vogiannou defines the tropicalization of a subvariety of the spherical homogeneous space $G/H$, extending the well-known theory of subvarieties of a torus.
We outline his construction here; more details and examples can be found in his thesis.
Suppose $G/H$ is a spherical homogeneous space over $k$, let $K := k((t))$ denote the field of Laurent series, and let $\overline{K} := \bigcup_{n \in \mathbb{N}} k((t^{1/n}))$ denote the field of Puiseaux series.
We use the valuation $\nu: \overline{K} \rightarrow \mathbb{Q}$ that gives the lowest power of $t$ appearing with nonzero coefficient.
Note that this restricts naturally to $K$ and is trivial on $k$.

We will define a map $G/H(K) \rightarrow \mathcal{N}_\mathbb{Q}$.
Let $\gamma: \spec{K} \rightarrow G/H$ be a $K$-point of $G/H$.
We will define a $G$-invariant discrete valuation $\nu_\gamma$ on $k(G/H)^\ast$ associated to $\gamma$.
To do this, we need to describe how $\nu_\gamma$ acts on rational functions, so let $f \in k(G/H)^\ast$ be arbitrary.
The domain of $f$ may not contain the image of $\gamma$, but we may find $g \in G$ such that the image of $\gamma$ is in the domain of $gf$.
There is a pullback map $\gamma^\ast: k(G/H) \rightarrow K$ given by evaluation at $\gamma$, so we consider $\gamma^\ast(gf) \in K$.
Then we write $\nu_\gamma(f) = \nu(\gamma^\ast(gf))$.
This is not a priori well-defined since it may depend on $g$.
To overcome this, we take $g$ so that $\nu(\gamma^\ast(gf))$ is minimized; this minimum is achieved on an open set of $G$ and we call such $g$ \emph{sufficiently general}.

Thus we have a map $G/H(K) \rightarrow \set{G\text{-invariant discrete valuations on } k(G/H)^\ast}$ given by $\gamma \mapsto \nu_\gamma$.
As discussed, $G$-invariant discrete valuations on $k(G/H)^\ast$ determine elements of $\mathcal{V}$, so this is really a map $G/H(K) \rightarrow \mathcal{V}$.
Further, we can extend this map so it is defined over $G/H(\overline{K})$.
Indeed, suppose $\gamma: \spec{\overline{K}} \rightarrow G/H$ is a $\overline{K}$-point.
This induces a homomorphism of $k$-algebras $\gamma^\ast: A \rightarrow \overline{K}$ since the image of $\gamma$ must lie in some open affine $\spec{A} \subseteq X$.
Since $G/H$ is of finite type, $A$ is finitely-generated as a $k$-algebra, and so it follows that $\gamma^\ast$ factors through $k((t^{1/n}))$ for some sufficiently large $n$.
Thus, $\gamma$ factors as $\spec{\overline{K}} \rightarrow \spec{k((t^{1/n}))} \rightarrow G/H$.
We can think of $\spec{k((t^{1/n}))}$ as the spectrum of Laurent polynomials in an indeterminate variable $t^{1/n}$.
This morphism induces a valuation by the work above; dividing this valuation by $n$ gives a valuation $\nu_\gamma$, which we associate to $\gamma$.
This extension in fact gives a surjection $\val: G/H(\overline{K}) \twoheadrightarrow \mathcal{V}$, which allows us to finally define the tropicalization of a subvariety of a homogeneous space.
\begin{definition}
If $Y \subseteq G/H$ is a subvariety, the \emph{tropicalization} of $Y$ is $\trop_G(Y) := \val\left(Y\left(\overline{K}\right)\right)$.
\end{definition}

In particular, note that $\trop_G(G/H) = \mathcal{V}(G/H)$, a fact we will use in \textsection \ref{construction}.

\section{The Construction}\label{construction}

Again let $G/H$ be a spherical homogeneous space and let $\mathcal{N}_\mathbb{Q}$ and $\mathcal{V} \subseteq \mathcal{N}_\mathbb{Q}$ be the associated vector space and valuation cone.
Let $X$ be a simple $G/H$-embedding $X$ with maximal colored cone $\left(\sigma, \mathcal{F} \right)$.
We will show how to tropicalize $X$ and then see how the tropicalization of a general embedding can be obtained by tropicalizing simple embeddings and gluing.
Then we will describe how to tropicalize a subvariety.

Each colored face $\tau$ of $\sigma$ corresponds to a $G$-orbit $\mathcal{O}_\tau$ of $X$, and Corollary 2.2 in \cite{Kn} says that each orbit is a spherical homogeneous $G$-variety with an open orbit of the same Borel subgroup $B$.
As a spherical homogeneous space, an orbit $\mathcal{O}_\tau$ associated to $\tau$ has a valuation cone $\mathcal{V}_\tau := \mathcal{V}\left(\mathcal{O}_\tau \right)$ that lies in a $\mathbb{Q}$-vector space $\mathcal{N}_\mathbb{Q}\left( \mathcal{O}_\tau \right)$.
Then as a set, we define $\trop_G(X) := \bigsqcup_{\tau \preceq \sigma} \mathcal{V}_\tau$.
This is similar in spirit to the construction of \cite{Ka} and \cite{Pay}; we break $X$ up into orbits and tropicalize each of them separately. It only remains to define a topology on this set.

Let $\overline{\mathbb{Q}} := \mathbb{Q} \cup \set{\infty}$.
We write 
\[
\overline{\mathcal{N}}(\sigma) := \Hom\left( \sigma^\vee \cap \mathcal{M}, \overline{\mathbb{Q}} \right),
\] 
where the homomorphisms in the set on the right are semigroup homomorphisms.
We will show that as sets $\bigsqcup_{\tau \preceq \sigma} \mathcal{N}_\mathbb{Q}(\mathcal{O}_\tau) \subseteq \overline{\mathcal{N}}_\mathbb{Q}$.

We fix our attention on a colored face $\tau \preceq \sigma$.
There is a copy of $\mathcal{N}_\mathbb{Q}\left( \mathcal{O}_\tau \right)$ in $\overline{\mathcal{N}}(\sigma)$ given by considering those semigroup homomorphisms $\varphi: \sigma^\vee \cap \mathcal{M} \rightarrow \overline{\mathbb{Q}}$ for which $\varphi^{-1}(\mathbb{Q}) = \tau^\perp \cap \left(\sigma^\vee \cap \mathcal{M} \right)$.
More explicitly,
\[
\Hom{\left(\tau^\perp \cap \left(\sigma^\vee \cap \mathcal{M} \right),\mathbb{Q} \right)} \cong \Hom{\left(\tau^\perp \cap \mathcal{M},\mathbb{Q} \right)} \cong \mathcal{N}_\mathbb{Q}(\mathcal{O}_\tau),
\]
which we see as follows.
We have that $\tau^\perp \cap \mathcal{M}$ consists of those functions in $\mathcal{M} = k(G/H)^{(B)}/k^\ast$ that do not have zeroes or poles along the orbit $\mathcal{O}_\tau$.
These are precisely the functions in $\mathcal{M}$ that can be restricted to $B$ semi-invariant rational functions on $\mathcal{O}_\tau$.
Restriction thus gives a map $\tau^\perp \cap \left( \sigma^\vee \cap \mathcal{M} \right) \rightarrow \mathcal{M}\left( \mathcal{O}_\tau \right)$.
Theorem 6.3 of \cite{Kn} shows that this map is an isomorphism, so after dualizing we have $\Hom{\left(\tau^\perp \cap \left( \sigma^\vee \cap \mathcal{M} \right),\mathbb{Q} \right)} \cong \mathcal{N}_\mathbb{Q}\left( \mathcal{O}_\tau \right)$.
Further, homomorphisms in $\Hom{\left( \tau^\perp \cap \left( \sigma^\vee \cap \mathcal{M} \right),\mathbb{Q} \right)}$ extend uniquely to homomorphisms in $\overline{\mathcal{N}}(\sigma)$ by sending every character outside $\tau^\perp \cap \mathcal{M}$ to $\infty$.

In the toric case, it now follows that $\bigsqcup_{\tau \preceq \sigma} \mathcal{N}_\mathbb{Q}\left( \mathcal{O}_\tau \right)$ is in bijective correspondence with $\overline{\mathcal{N}}(\sigma)$.
For a general spherical variety, this is not necessarily true. 
This is because a colored cone may contain a subcone which is a face in the sense of polyhedral geometry but which lies outside the valuation cone and therefore does not correspond to an orbit in the spherical variety. 
To address this, we write $\overline{\mathcal{N}}_\mathcal{V}(\sigma)$
to denote the set of homomorphisms $\varphi \in \overline{\mathcal{N}}(\sigma)$ such that $\varphi^{-1}(\mathbb{Q}) = \tau^\perp \cap \left(\sigma^\vee \cap \mathcal{M} \right)$ for some colored face $\tau \preceq \sigma$.

Now every homomorphism $\varphi \in \overline{\mathcal{N}}_\mathcal{V}(\sigma)$ is realized as an extension of a homomorphism $\tau^\perp \cap \left(\sigma^\vee \cap \mathcal{M} \right) \rightarrow \mathbb{Q}$ for some unique $\tau \preceq \sigma$ where $\varphi^{-1}(\mathbb{Q}) =  \tau^\perp \cap \left(\sigma^\vee \cap \mathcal{M} \right)$.
Thus we have a bijective correspondence:
\[
\overline{\mathcal{N}}_\mathcal{V}(\sigma) \leftrightarrow \bigsqcup_{\tau \preceq \sigma} \mathcal{N}_\mathbb{Q}\left(\mathcal{O}_\tau \right).
\]
Placing the topology on $\overline{\mathcal{N}}_\mathcal{V}(\sigma)$ inherited from $\overline{\mathbb{Q}}^{\sigma^\vee \cap \mathcal{M}}$, we obtain a topology on $\bigsqcup_{\tau \preceq \sigma} \mathcal{N}_\mathbb{Q}\left( \mathcal{O}_\tau \right)$.
Under this topology, $\bigsqcup_{\tau \preceq \sigma} \mathcal{N}_\mathbb{Q}\left( \mathcal{O}_\tau \right)$ is isomorphic to a subspace of $\overline{\mathbb{Q}}^{m}$, where $m$ is the rank of $\mathcal{N}$.

We still must see how $\bigsqcup_{\tau \preceq \sigma} \mathcal{V}_\tau$ lies in $\overline{\mathcal{N}}_\mathcal{V}(\sigma)= \bigsqcup_{\tau \preceq \sigma} \mathcal{N}_\mathbb{Q}\left(\mathcal{O}_\tau \right)$ under this topology.
Any homomorphism in $\Hom{(\tau^\perp \cap \mathcal{M},\mathbb{Q})}$ that is induced by a $G$-invariant valuation is the restriction of a $G$-variant valuation in $\mathcal{V}(G/H) \subseteq \Hom{(\mathcal{M},\mathbb{Q})}$.
Indeed, by Corollary 1.5 in \cite{Kn}, any $G$-invariant valuation on $k\left(\mathcal{O}_\tau \right)$ can be lifted to a $G$-invariant valuation on $k(G)$, which can then be restricted to $k(G/H)$.
Thus, the valuation cone $\mathcal{V}_\tau$ is the image of the valuation cone $\mathcal{V}$ under the map $\Hom{(\mathcal{M},\mathbb{Q})} \rightarrow \Hom{\left(\tau^\perp \cap \mathcal{M},\mathbb{Q}\right)}$ induced by the inclusion $\tau^\perp \cap \mathcal{M} \hookrightarrow \mathcal{M}$.
Thus $\trop_G(X) = \bigsqcup_{\tau \preceq \sigma} \mathcal{V}_\tau$ inherits the subspace topology from $\overline{\mathcal{N}}_\mathcal{V}(\sigma)$.

To tropicalize a non-simple spherical embedding, we tropicalize the simple embeddings corresponding to each of its maximal cones and then glue these together along the tropicalizations of their shared orbits. If $Y \subseteq G/H$ is a closed subvariety and $\overline{Y} \subseteq X$ its closure in $X$, then we define
\[
\trop_G\left(\overline{Y}\right) := \bigsqcup_{\tau \preceq \sigma} \trop_G \left(\overline{Y} \cap \mathcal{O}_\tau \right) \subseteq \bigsqcup_{\tau \preceq \sigma} \mathcal{V}_\tau = \trop_G(X).
\]
Here, $\trop_G\left(\overline{Y} \cap \mathcal{O}_\tau \right)$ denotes the tropicalization as a subvariety of a homogeneous space and we give $\trop_G\left( \overline{Y} \right)$ the subspace topology inherited from $\trop_G(X)$.

Our construction as discussed thus far only recognizes the polyhedral structure of the colored fan but ignores whether or not that fan has colors.
We need this information to completely classify all spherical embeddings, so it seems useful to remember the presence of colors when we tropicalize.
We address this as follows.
In a colored fan, we can think of our palette of colors as a collection of points in $\mathcal{N}_\mathbb{Q}$ corresponding to $B$-stable prime divisors.
If no colors appear in our fan, we call the spherical variety \emph{toroidal}.
If a color appears, it will lie in some number of colored cones, which is to say the associated prime divisor contains the orbits corresponding to those cones.
Each such orbit gives a cone in the stratification of the tropicalization; we simply record whether the color appears in the colored fan by labeling its associated valuation cone with that color.
We show in Example \ref{A2} two different spherical embeddings of the same homogeneous space that have different colored tropicalizations; if color is ignored the tropicalizations become the same.

%\begin{remark}\label{remark} 
%Toroidal varieties are in general better behaved than non-toroidal varieties with respect to tropicalization. Explicitly, Vogiannou proved that tropical compactifications in the sense of Tevelev \cite{Te} exist in toroidal varieties (cf. \cite{Vo}, \textsection 4), but they do not necessarily exist in non-toroidal varieties.
%Every spherical variety is dominated by a toroidal one, however, so this is not a huge problem. 
%\end{remark}

%We observe finally that the map $\val: X(\overline{K}) \rightarrow \trop{X}$ is continuous.
%To see this, recall that the tropicalization map is determined by evaluation at $\overline{K}$-points.
%Embedding $G/H$ in a spherical variety adds $\overline{K}$-points which are limits of $\overline{K}$-points of $G/H$.
%Take a sequence in $\overline{K}(G/H)$ which has a limit point $\gamma \in G/H_j$ outside of $\overline{K}(G/H)$.
%Evaluation by functions in $k(G/H)$ will not be defined at $\gamma$ since the valuation of some functions will tend to $\infty$.
%But if we take the limit as the sequence approaches $\gamma$ we obtain a homomorphism in $\Hom{(\sigma_j^\perp \cap \sigma_i^\vee \cap \mathcal{M}, \overline{\mathbb{Q}})}$.
%This limit agrees with the tropicalization of $\gamma$ as a point in $\bigsqcup_i \mathcal{V}(\mathcal{O}_i) = \trop{X}$, so the map is continuous.

\section{Examples}\label{examples}

\begin{example}\label{toricex}
In the toric case, $G = T^n$ is a torus of dimension $n$, $H$ is trivial, and $B = G$, so there are no colors.
The $B$ semi-invariant rational functions are precisely the monomials in the variables $\set{x_1,\ldots,x_n}$, so $\mathcal{N}_\mathbb{Q} \cong \mathbb{Q}^n$, spanned by cocharacters $\chi_i^*$ defined as follows:
\[
\chi^*_i(x_j) = \left\lbrace \begin{array}{ll}
1 & i = j \\
0 & \text{otherwise.}
\end{array}
\right.
\]
We can also see that the valuation cone $\mathcal{V}$ is all of $\mathcal{N}_\mathbb{Q}$.
Indeed, consider the valuations
\[
\frac{f}{g} \mapsto \text{mindeg}_i(f) - \text{mindeg}_i(g) \qquad \qquad \frac{f}{g} \mapsto \text{deg}_i(g) - \text{deg}_i(f),
\]
where $\text{deg}_i$ and $\text{mindeg}_i$ respectively denote the degree and minimum degree in $x_i$.
These valuations are $G$-invariant and induce the cocharacters $\chi_i^*$ and $-\chi_i^\ast$, so $\mathcal{V} = \mathcal{N}_\mathbb{Q}$.

Having realized the torus as a spherical homogeneous space, the colored fan associated to a toric variety when viewed as a spherical embedding is the same as the fan coming from the theory of toric geometry.
Our definition of the tropicalization only relies on the polyhedral structure of this fan and is identical to the process described in \cite{Ka} and \cite{Pay}.
\end{example}
\begin{example}\label{flag}
A flag variety is a spherical homogeneous space $G/P$ where $P$ is a parabolic subgroup, one which contains a Borel subgroup.
Such a homogenous space has a trivial valuation cone, so the tropicalization of a flag variety is a point under this theory.
\end{example}
\begin{example}\label{A2}
We return now to the example and notation of $\mathbb{A}^2\setminus \set{0}$ discussed in Example \ref{EX}.
In $\Bl_0(\mathbb{P}^2)$, there are three $G$-orbits: $G/H$, $V(W)$, and the exceptional divisor $E$. We have already seen that the valuation cone of $G/H$ is a copy of $\mathbb{Q}$, so we move on to $V(W)$ and $E$.
Both of these are copies of $\mathbb{P}^1$ and the action of $G = \Sl_2$ on both of them is given by matrix multiplication.
In this case $G/B \cong \mathbb{P}^1$, so the closed orbits are flag varieties and our discussion in example \ref{flag} tells us the tropicalizations are trivial.
Let us show this explicitly.
The rational functions $k(G/B) = k(\mathbb{P}^1)$ are quotients of two homogeneous polynomials of the same degree in the variables $X$ and $Y$.
The action of the Borel subgroup $B$ on these functions is the same as it is on $k(G/H)$, so the only $B$ semi-invariant rational functions on $k(G/B)$ are powers of $Y$.
The only power of $Y$ in $k(\mathbb{P}^1)$ is the constant function, so $k(\mathbb{P}^1)^{(B)}$ is trivial and hence so are the associated $\mathcal{M}$, $\mathcal{N}_\mathbb{Q}$, and $\mathcal{V}$.

Thus, $\bigsqcup_\tau \mathcal{V}\left( \mathcal{O}_\tau \right)$ in this case consists of a copy of $\mathbb{Q}$ and two points.
The two points attach to $\mathbb{Q}$ by thinking of them as $\infty$ and $-\infty$.
We can think of $\Bl_0(\mathbb{P}^2)$ as the two simple spherical varieties $\Bl_0(\mathbb{A}^2)$ and $\mathbb{P}^2 \setminus \set{0}$ glued together along $G/H = \mathbb{A}^2 \setminus \set{0}$.
In $\Bl_0(\mathbb{A}^2)$, we add in limit points over 0, which correspond to extended valuations taking $y \in k(G/H)^{(B)}$ to $\infty$, giving a copy of $\overline{\mathbb{Q}}$.
In $\mathbb{P}^2 \setminus \set{0}$, we add in limit points at infinity, which similarly correspond to extended valuations and we again get $\overline{\mathbb{Q}}$.
We finally glue these copies of $\overline{\mathbb{Q}}$ along their shared copy of $\mathbb{Q}$ by identifying a number in one copy of $\mathbb{Q}$ with its reciprocal in the other copy.
This is illustrated in Figure \ref{BLP2}. The gluing is reminiscent of the tropicalization of $\mathbb{P}^1$ viewed as a toric variety, as described in \textsection 6.2 of \cite{MS}.

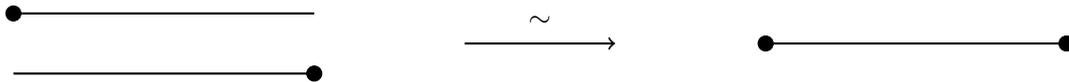
\begin{figure}[!h]
\begin{center}
\begin{tikzpicture}[scale = 2]
\draw[thick] (-4,-.2)--(-6,-.2);
\draw[thick] (-4,.2)--(-6,.2);
\draw[fill] (-4,-.2) circle(.05);
\draw[fill] (-6,.2) circle(.05);

\draw (-2.5,.15) node[]{$\sim$};
\draw[thick,->] (-3,0)--(-2,0);

\draw[thick] (-1,0)--(1,0);
\draw[fill] (-1,0) circle(.05);
\draw[fill] (1,0) circle(.05);
\end{tikzpicture}
\caption{The colored tropicalization of $\Bl_0(\mathbb{P}^2)$}
\label{BLP2}
\end{center}
\end{figure}
If instead we consider the $G/H$-embedding $\mathbb{P}^2$, the simple spherical varieties are $\mathbb{P}^2 \setminus \set{0}$ and $\mathbb{A}^2$. The former can be tropicalized as before, but $\mathbb{A}^2$ has a $G$-fixed point $[1:0:0]$ whose associated cone has color.
The point has a trivial valuation cone, just like the effective divisor in $\Bl_0(\mathbb{P}^2)$.
The gluing operation works the same as with $\Bl_0(\mathbb{P}^2)$, so topologically we again obtain a line segment.
This is shown in Figure \ref{ColP2}; the colored point associated to $[1:0:0]$ is on the right.  
\begin{figure}[!h]
\begin{center}
\begin{tikzpicture}[scale = 2]
\draw[thick] (-1,0)--(1,0);
\draw[fill] (-1,0) circle(.05);
\draw[red,fill=white] (1,0) circle(.1);
\draw[red,fill= red] (1,0) circle(.05);
\end{tikzpicture}
\caption{The colored tropicalization of $\mathbb{P}^2$}
\label{ColP2}
\end{center}
\end{figure}
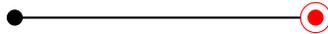 
The other embeddings of $\mathbb{A}^2 \setminus \set{0}$ are obtained from $\Bl_0\left( \mathbb{P}^2 \right)$ or $\mathbb{P}^2$ by omitting certain orbits.
Thus, their tropicalizations omit the corresponding pieces, and we obtain the other results depicted in Table \ref{table}.
\end{example}
\begin{example}
This example extends \textsection 5.3 of \cite{Vo}. Our group is $G = \Gl_2 \times \Gl_2$ and the subgroup $H$ is the diagonal, so that $G/H \cong \Gl_2$. the action of $G$ on $G/H$ is given by $(g,h) \cdot X = gXh^{-1}$ and the Borel subgroup is 
\[
B = \set{(U,L) \mid U \text{ is upper triangular and } L \text{ is lower triangular}}.
\]
The Borel subgroup has an open orbit $\set{(x_{ij}) \in \Gl_2 \mid x_{22} \neq 0}$, so this is a spherical homogeneous space.
We will embed $G/H$ into $\Bl_0(\mathbb{A}^4)$ by sending $X = (x_{ij})$ to its image in the principal open subset $D(x_{11}x_{22} - x_{12}x_{21}) \subset \mathbb{A}^4 \setminus \set{0}$.
Viewing $\Bl_0(\mathbb{A}^4)$ as a subvariety of $\mathbb{A}^4 \times \mathbb{P}^3$, we give $\mathbb{P}^3$ the coordinates $y_{ij}$ and denote elements by $((x_{ij}),[y_{ij}])$, so that the blowup is cut out by the equations $x_{ij}y_{k\ell} = x_{k\ell}y_{ij}$.
The action of $G$ on $\Bl_0(\mathbb{A}^4)$ is then given by matrix multiplication in both components:
\[
(g,h) \cdot ((x_{ij}),[y_{ij}]) = (g(x_{ij})h^{-1},g[y_{ij}]h^{-1}), \quad (g,h) \in G, ((x_{ij}),[y_{ij}]) \in \Bl_0(\mathbb{A}^4).
\]
In this case, the lattice of $B$ semi-invariant rational functions $\mathcal{M} = k(G/H)^{(B)}/k^*$ on $G/H$ is spanned by $f_1 := (x_{11}x_{22} - x_{12}x_{21})/x_{22}$ and $f_2 := x_{22}$. We choose these particular generators because the associated cocharacters are cleaner for computations.
The palette $\mathcal{D}$ in this case consists of one $B$-stable prime divisor: $V(x_{22})$.
The vector space $\mathcal{N}_\mathbb{Q}$ of cocharacters is two-dimensional, spanned by $\chi_1^*$ and $\chi_2^*$ defined as follows:
\[
\chi_{i}^*(f_j) = \left\lbrace \begin{array}{ll}
1 & \text{if $i = j$} \\
0 & \text{otherwise}
\end{array}
\right..
\]
The valuation cone $\mathcal{V}$ associated to $G/H$ is $\set{\alpha_1\chi_1^* + \alpha_2\chi_2^* \mid \alpha_1 \geq \alpha_2}$. The valuation cone and palette of $G/H$ are shown in Figure \ref{valcone}.

Let us now determine the colored fan associated to $\Bl_0\left(\mathbb{A}^4 \right)$ as a $\Gl_2$-embedding. Under the prescribed action of $G$, there are four $G$-orbits: 
\begin{align*}
\Gl_2 & := \set{((x_{ij}),[y_{ij}]) \mid (x_{ij}) \in \Gl_2} \\
R_1 & := \set{((x_{ij}),[y_{ij}]) \mid (x_{ij}) \text{ has  rank } 1} \\
\mathbb{P}(\Gl_2) & :=  \set{(0,[y_{ij}]) \mid (y_{ij}) \in \Gl_2} \\
\mathbb{P}(R_1) & := \set{(0,[y_{ij}]) \mid (y_{ij}) \text{ has rank } 1}
\end{align*}
Only one of these orbits is closed: $\mathbb{P}(R_1)$, so we will have one maximal colored cone.

There are three $B$-stable prime divisors of $\Bl_0\left( \mathbb{A}^4 \right)$: $V(y_{11}y_{22} - y_{12}y_{21}) = V(\text{det}(y_{ij}))$, the exceptional divisor $E$, and the color $V(x_{22})$.
The only $B$-stable prime divisors containing $\mathbb{P}(R_1)$ are $E$ and $V(\text{det}(y_{ij}))$.
Both of these are also $G$-stable, so our fan will have no colors.
Along $E$, $f_2$ clearly vanishes with order 1 and $f_1$ can be written in the form $f_1 = f_2 \cdot (y_{11}y_{22} - y_{12}y_{21})/y_{22}^2$, so it also vanishes with order 1 along $E$. Thus we obtain a ray $\sigma_{1,1}$ in the direction $(1,1)$.
Along $V(\text{det}(y_{ij}))$, $f_1$ vanishes with order 1 and $f_2$ doesn't vanish, so we obtain a ray $\sigma_{1,0}$ in the direction $(1,0)$.
Together $\sigma_{1,0}$ and $\sigma_{1,1}$ span a single two-dimensional cone $\sigma$.
Figure \ref{blowupA4} exhibits the colored cone associated to $\Bl_0(\mathbb{A}^4)$.
\begin{figure}[!h]
\begin{floatrow}
\ffigbox{\begin{tikzpicture}
\draw[fill,gray!25] (-2,-2)--(2,-2)--(2,2);
\draw[thick,->] (0,0)--(2,2);
\draw[fill] (0,0) circle(.05);
\draw[thick,->] (0,0)--(-2,-2);
\draw[red,fill=red] (-1,1) circle(.05);
\draw[red] (-1,1) circle(.1);

\draw[->,dotted] (0,0)--(2,0);
\draw[->,dotted] (0,0)--(-2,0);
\draw[->,dotted] (0,0)--(0,2);
\draw[->,dotted] (0,0)--(0,-2);

\draw (2.3,0) node {$\alpha_1$};
\draw (0,2.15) node {$\alpha_2$};
\end{tikzpicture}
\caption{The valuation cone and palette of $\Gl_2$}
\label{valcone}}

\ffigbox{\begin{tikzpicture}
\draw[fill,gray!25] (0,0)--(2,2)--(2,0);
\draw[fill] (0,0) circle(.05);

\draw[very thick,->] (0,0)--(2,0);
\draw[very thick,->] (0,0)--(2,2);

\draw[->,dotted] (0,0)--(2,0);
\draw[->,dotted] (0,0)--(-2,0);
\draw[->,dotted] (0,0)--(0,2);
\draw[->,dotted] (0,0)--(0,-2);

\draw (2.3,0) node {$\alpha_1$};
\draw (0,2.15) node {$\alpha_2$};
\end{tikzpicture}
\caption{The colored cone of $\Bl_0(\mathbb{A}^4)$.}
\label{blowupA4}}

\end{floatrow}
\end{figure}

Now we apply our construction. 
We start with $\bigsqcup \mathcal{N}_\mathbb{Q}(\mathcal{O}_i)$, where the disjoint union is over four separate colored cones corresponding to the four $G$-orbits in $\Bl_0(\mathbb{A}^4)$.
There is one colored cone of dimension zero $(\Gl_2)$, two of dimension one $(R_1 \text{ and } \mathbb{P}(\Gl_2))$, and one of dimension two $(\mathbb{P}(R_1))$, so $\bigsqcup \mathcal{N}_\mathbb{Q}(\mathcal{O}_i)$ consists of one copy of $\mathbb{Q}^2$, two copies of $\mathbb{Q}^1$, and one zero-dimensional vector space.
They are shown in Figure \ref{QA4}. The vertical line corresponds to the orbit $R_1$ and the slanted line to $\mathbb{P}(GL_2)$.
\begin{figure}[!h]
\begin{tikzpicture}
\draw[fill] (0,0) circle(.05);
%\draw[fill] (-1.5,1.5) circle(.05);
\draw[fill,gray!25] (-2,1)--(-2,-3)--(-6,-3)--(-6,1);

\draw[thick] (-.5,.5)--(-2.5,2.5);
\draw[very thick] (0,-.75)--(0,-3);
%
%\draw[thick,->] (2,0)--(4,0);
%
%\draw[fill,gray!25] (6,2)--(8,0)--(8,-2)--(6,-2);
%\draw[fill] (8,0) circle(.05);
%\draw[thick] (6,2)--(8,0);
%\draw[very thick] (8,-2)--(8,0);
\end{tikzpicture}
\caption{$\bigsqcup \mathcal{N}_\mathbb{Q}(\mathcal{O}_i)$}
\label{QA4}
\end{figure}
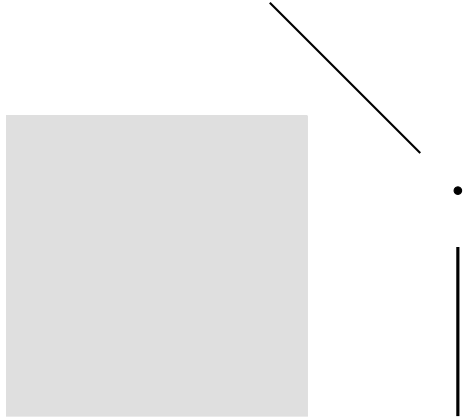

Now we consider the valuation cone $\mathcal{V}$ of each orbit.
We have already seen that the valuation cone of $\Gl_2$ is given by $\set{(\alpha_1,\alpha_2) \mid \alpha_1 \geq \alpha_2}$ and the valuation cone of $\mathbb{P}(R_1)$ is necessarily trivial.
The orbit $R_1$ is a spherical homogeneous space isomorphic to $V(x_{11}x_{22} - x_{12}x_{21}) \subset \mathbb{A}^4 \setminus \set{0}$.
The $B$ semi-invariant rational functions $k(R_1)^{(B)}$ are just the powers of $f_2$ since $f_1$ vanishes along $R_1$.
We can define valuations that consider the degree of a rational function similarly to Example \ref{EX}, so the valuation cone $\mathcal{V}(R_1)$ is a copy of $\mathbb{Q}$.
Alternatively, $\mathcal{V}(R_1)$ is the image of $\mathcal{V}(\Gl_2)$ under the projection map $\mathcal{N}_\mathbb{Q}(\Gl_2) \rightarrow\mathcal{N}_\mathbb{Q}(R_1)$.
This map is defined by taking a product $f_1^{\alpha_1}f_2^{\alpha_2} \in k(\Gl_2)^{(B)}/k^*$ to $f_2^{\alpha_2} \in k(R_1)^{(B)}/k^*$.
The image of $\mathcal{V}(\Gl_2)$ is all of $\mathcal{N}_\mathbb{Q}(R_1)$, so $\mathcal{V}(R_1) = \mathcal{N}_\mathbb{Q}(R_1)$.

Finally, the orbit $\mathbb{P}(\Gl_2)$ is $D(y_{11}y_{22} - y_{12}y_{21}) \subset \mathbb{P}^3$.
The $B$ semi-invariant rational functions $k(\mathbb{P}(\Gl_2))^{(B)}$ are spanned by $(y_{11}y_{22} - y_{12}y_{21})/y_{22}^2$ since they must have the same degree in the numerator and denominator.
Since $\nu(f_1) \geq \nu(f_2)$ for any $G$-invariant valuation $\nu$ on $\mathcal{M}(\Gl_2)$, we have $\nu((y_{11}y_{22} - y_{12}y_{21})/y_{22}^2) \geq 0$ for any $G$-invariant valuation $\nu$ on $\mathcal{M}(\mathbb{P}(\Gl_2))$.
Thus, $\mathcal{V}(\mathbb{P}(\Gl_2))$ is a ray in $\mathcal{N}_\mathbb{Q}(\mathbb{P}(\Gl_2))$.
The union of the valuation cones and their gluing is illustrated in Figure \ref{tropA4}.
In Table \ref{coltrops} we exhibit several other embeddings of $\Gl_2$ along with their associated colored cones and colored tropicalizations.
\begin{figure}[!h]
\begin{tikzpicture}
\draw[fill] (0,0) circle(.05);
\draw[fill] (-1.5,1.5) circle(.05);
\draw[fill,gray!25] (-2,1)--(-2,-3)--(-6,-3);

\draw[thick] (-6,-3)--(-2,1);
\draw[thick] (-.5,.5)--(-1.5,1.5);
\draw[very thick] (0,-.75)--(0,-3);

\draw[thick,->] (1.5,-1)--(3.5,-1);

\draw[fill,gray!25] (7,1)--(8,0)--(8,-3)--(5,-1);
\draw[fill] (8,0) circle(.05);
\draw[fill] (7,1) circle(.05);
\draw[thick] (7,1)--(5,-1);
\draw[thick] (7,1)--(8,0);
\draw[very thick] (8,-3)--(8,0);
\end{tikzpicture}
\caption{$\bigsqcup \mathcal{V}_i$ and the tropicalization of $\Bl_0(\mathbb{A}^4)$}
\label{tropA4}
\end{figure}
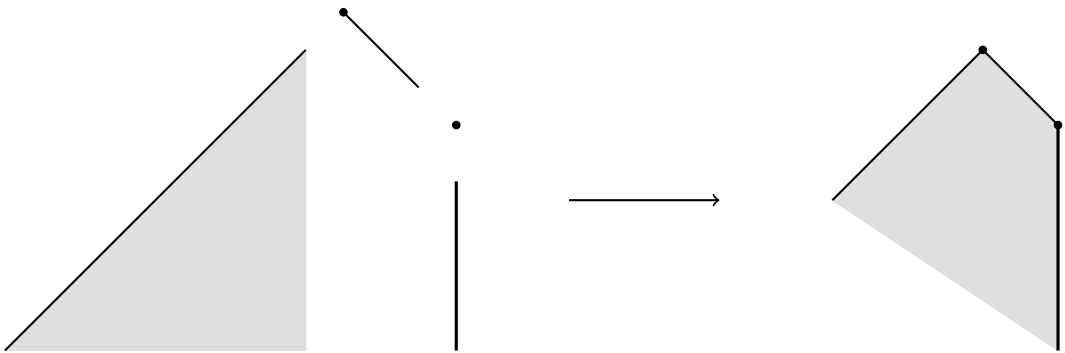

\begin{table}[h!]
\begin{tabular}{| >{\centering\arraybackslash} m{4cm} >{\centering\arraybackslash} m{4cm} >{\centering\arraybackslash} m{4cm} |}
\hline
Variety & Colored Fan & Colored Tropicalization \\
\hline
\raisebox{1.4cm}{$\Gl_2$} \rule{0pt}{2.8cm} & \begin{tikzpicture}[scale = .5]
%\draw[fill,gray!25] (-2,-2)--(2,-2)--(2,2);
%\draw[thick,->] (0,0)--(2,2);
\draw[fill] (0,0) circle(.1);
%\draw[thick,->] (0,0)--(-2,-2);

\draw[->,dotted] (0,0)--(2,0);
\draw[->,dotted] (0,0)--(-2,0);
\draw[->,dotted] (0,0)--(0,2);
\draw[->,dotted] (0,0)--(0,-2);

%\draw (2.3,0) node {$\alpha_1$};
%\draw (0,2.15) node {$\alpha_2$};
\end{tikzpicture} &
\begin{tikzpicture}
\draw[fill,gray!25] (0,0)--(1,1)--(1,-1);
\draw[thick] (0,0)--(1,1);
\end{tikzpicture} \\

\raisebox{1.4cm}{$\mathbb{A}^4 \setminus \set{0}$} \rule{0pt}{2.8cm} & 
\begin{tikzpicture}[scale = .5]
%\draw[fill,gray!25] (-2,-2)--(2,-2)--(2,2);
%\draw[thick,->] (0,0)--(2,2);
\draw[fill] (0,0) circle(.1);
\draw[very thick,->] (0,0)--(2,0);

\draw[->,dotted] (0,0)--(2,0);
\draw[->,dotted] (0,0)--(-2,0);
\draw[->,dotted] (0,0)--(0,2);
\draw[->,dotted] (0,0)--(0,-2);

%\draw (2.3,0) node {$\alpha_1$};
%\draw (0,2.15) node {$\alpha_2$};
\end{tikzpicture}
& 
\begin{tikzpicture}
\draw[fill,gray!25] (0,0)--(1,1)--(1,-1);
\draw[thick] (0,0)--(1,1);
\draw[very thick] (1,1)--(1,-1);

\draw[fill = white] (1,1) circle(.1);
\end{tikzpicture} \\

\raisebox{1.4cm}{$\mathbb{A}^4$} \rule{0pt}{2.8cm} 
& 
\begin{tikzpicture}[scale = .5]
\draw[fill,red!50] (-2,2)--(0,0)--(2,0)--(2,2);
\draw[very thick,red,->] (0,0)--(-2,2);
\draw[red,fill=white] (-1,1) circle(.2);
\draw[red, fill=red] (-1,1) circle(.1);
%\draw[thick,->] (0,0)--(2,2);
\draw[fill] (0,0) circle(.1);

\draw[very thick,->] (0,0)--(2,0);
\draw[->,dotted] (0,0)--(-2,0);
\draw[->,dotted] (0,0)--(0,2);
\draw[->,dotted] (0,0)--(0,-2);

%\draw (2.3,0) node {$\alpha_1$};
%\draw (0,2.15) node {$\alpha_2$};
\end{tikzpicture}
& 
\begin{tikzpicture}
\draw[fill,gray!25] (0,0)--(1,1)--(1,-1);
\draw[thick] (0,0)--(1,1);
\draw[very thick] (1,1)--(1,-1);

\draw[red,fill = white] (1,1) circle(.1);
\draw[red,fill = red] (1,1) circle(.05);
\end{tikzpicture} \\

\raisebox{1.4cm}{$\Bl_0(\mathbb{A}^4)$} \rule{0pt}{2.8cm} 
& 
\begin{tikzpicture}[scale = .5]
\draw[fill,gray!25] (0,0)--(2,0)--(2,2);
%\draw[very thick,red,->] (0,0)--(-2,2);
%\draw[red] (-1,1) circle(.2);
%\draw[red, fill=red] (-1,1) circle(.1);

\draw[fill] (0,0) circle(.1);

\draw[very thick,->] (0,0)--(2,2);
\draw[very thick,->] (0,0)--(2,0);
\draw[->,dotted] (0,0)--(2,0);
\draw[->,dotted] (0,0)--(-2,0);
\draw[->,dotted] (0,0)--(0,2);
\draw[->,dotted] (0,0)--(0,-2);

%\draw (2.3,0) node {$\alpha_1$};
%\draw (0,2.15) node {$\alpha_2$};
\end{tikzpicture}
& 
\begin{tikzpicture}
\draw[fill,gray!25] (0,0)--(.5,.5)--(1,0)--(1,-1);
\draw[very thick] (1,0)--(1,-1);
\draw[thick] (0,0)--(.5,.5);
\draw[thick] (.5,.5)--(1,0);

\draw[fill] (.5,.5) circle(.05);
\draw[fill] (1,0) circle(.05);
\end{tikzpicture} \\

\raisebox{1.4cm}{$\mathbb{P}^4 \setminus \set{0}$} \rule{0pt}{2.8cm} 
& 
\begin{tikzpicture}[scale = .5]
%\draw[fill,red!50] (-2,2)--(0,0)--(2,0)--(2,2);
%\draw[very thick,red,->] (0,0)--(-2,2);
%\draw[red] (-1,1) circle(.2);
%\draw[red, fill=red] (-1,1) circle(.1);
\draw[fill,gray!25] (0,0)--(2,0)--(2,-2)--(-2,-2);

\draw[fill] (0,0) circle(.1);

\draw[very thick,->] (0,0)--(-2,-2);
\draw[very thick,->] (0,0)--(2,0);
\draw[->,dotted] (0,0)--(2,0);
\draw[->,dotted] (0,0)--(-2,0);
\draw[->,dotted] (0,0)--(0,2);
\draw[->,dotted] (0,0)--(0,-2);

%\draw (2.3,0) node {$\alpha_1$};
%\draw (0,2.15) node {$\alpha_2$};
\end{tikzpicture}
& 
\begin{tikzpicture}
\draw[fill,gray!25] (0,0)--(1,1)--(1,-1);
\draw[thick] (0,0)--(1,1);
\draw[very thick] (1,1)--(1,-1);
\draw[thick] (0,0)--(1,-1);

\draw[fill = white] (1,1) circle(.1);
\draw[fill] (0,0) circle(.05);
\draw[fill] (1,-1) circle(.05);
\end{tikzpicture} \\

\raisebox{1.4cm}{$\mathbb{P}^4$} \rule{0pt}{2.8cm} 
& 
\begin{tikzpicture}[scale = .5]
\draw[fill,red!50] (-2,2)--(0,0)--(2,0)--(2,2);
\draw[very thick,red,->] (0,0)--(-2,2);
\draw[red,fill=white] (-1,1) circle(.2);
\draw[red, fill=red] (-1,1) circle(.1);
\draw[fill,gray!25] (0,0)--(2,0)--(2,-2)--(-2,-2);

\draw[fill] (0,0) circle(.1);

\draw[very thick,->] (0,0)--(-2,-2);
\draw[very thick,->] (0,0)--(2,0);
\draw[->,dotted] (0,0)--(-2,0);
\draw[->,dotted] (0,0)--(0,2);
\draw[->,dotted] (0,0)--(0,-2);

%\draw (2.3,0) node {$\alpha_1$};
%\draw (0,2.15) node {$\alpha_2$};
\end{tikzpicture}
& 
\begin{tikzpicture}
\draw[fill,gray!25] (0,0)--(1,1)--(1,-1);
\draw[thick] (0,0)--(1,1);
\draw[very thick] (1,1)--(1,-1);
\draw[thick] (0,0)--(1,-1);

\draw[red,fill = white] (1,1) circle(.1);
\draw[fill] (0,0) circle(.05);
\draw[fill] (1,-1) circle(.05);
\draw[red,fill = red] (1,1) circle(.05);
\end{tikzpicture} \\

\raisebox{1.4cm}{$\Bl_0(\mathbb{P}^4)$} \rule{0pt}{2.8cm}
&
\begin{tikzpicture}[scale = .5]
\draw[fill,gray!25] (0,0)--(2,0)--(2,2);
%\draw[very thick,red,->] (0,0)--(-2,2);
%\draw[red] (-1,1) circle(.2);
%\draw[red, fill=red] (-1,1) circle(.1);
\draw[fill,gray!25] (0,0)--(2,0)--(2,-2)--(-2,-2);

\draw[fill] (0,0) circle(.1);

\draw[very thick,->] (0,0)--(2,2);
\draw[very thick,->] (0,0)--(-2,-2);
\draw[very thick,->] (0,0)--(2,0);
\draw[->,dotted] (0,0)--(2,0);
\draw[->,dotted] (0,0)--(-2,0);
\draw[->,dotted] (0,0)--(0,2);
\draw[->,dotted] (0,0)--(0,-2);

%\draw (2.3,0) node {$\alpha_1$};
%\draw (0,2.15) node {$\alpha_2$};
\end{tikzpicture}
& 
\begin{tikzpicture}
\draw[fill,gray!25] (0,0)--(.5,.5)--(1,0)--(1,-1);
\draw[very thick] (1,0)--(1,-1);
\draw[thick] (0,0)--(.5,.5);
\draw[thick] (.5,.5)--(1,0);
\draw[thick] (0,0)--(1,-1);

\draw[fill] (.5,.5) circle(.05);
\draw[fill] (1,0) circle(.05);
\draw[fill] (0,0) circle(.05);
\draw[fill] (1,-1) circle(.05);
\end{tikzpicture} \\
\hline
\end{tabular}
\caption{Colored fans and colored tropicalizations of $\Gl_2$-embeddings}
\label{coltrops}
\end{table}
\end{example}

\section{The Fundamental Theorem}\label{FTTG}

Along with Vogiannou's work, Kaveh and Manon in \cite{KM} give an alternate means for tropicalizing subvarieties of spherical homogeneous spaces.
Their approach uses a Gr\"obner theory for spherical varieties that they develop. 
Ultimately, they prove via a fundamental theorem that the two constructions coincide. 
In the following sections, we describe how their construction can be extended to spherical embeddings and show that this notion of extended spherical tropicalization coincides with that described in \textsection \ref{construction}.
We retain the conventions and notation used previously.

We begin by outlining the Fundamental Theorem of Tropical Geometry in the toric case (cf. \cite{SS}, Theorem 2.1 or \cite{MS}, Theorem 3.2.3).
The theorem is usually stated in terms of $\mathbb{R}$ rather than $\mathbb{Q}$.
We work over $\mathbb{Q}$ as this is the standard convention in spherical geometry; our results can be modified by tensoring with $\mathbb{R}$ to no ill effect.

Let $v: k \rightarrow \overline{\mathbb{Q}}$ be a valuation and assume that there exists a splitting $\varphi: v(k^\ast) \rightarrow k^\ast$ such that $(v \circ \varphi)(w) = w$.
%This assumption requires that there exists an element $t \in k^\ast$ such that $\nu(t) = 1$.
%This condition is not particularly restrictive since any positive multiple of a valuation is also a valuation. 
We will write $t^w = \varphi(w)$.
Then we have an associated valuation ring $R := \set{x \in k \mid v(x) \geq 0}$ with maximal ideal $\mathfrak{m} := \set{x \in k \mid v(x) > 0}$.
We write the residue field $\Bbbk := R/\mathfrak{m}$.
Denote by $\overline{x}$ the image of $x \in R$ under the projection map $R \rightarrow \Bbbk$.
Let $f = \sum_{\vect{u} \in \mathbb{Z}^n} a_\vect{u}\vect{x}^\vect{u} \in k[x_1^{\pm 1},\ldots, x_m^{\pm 1}]$ and let $\vect{w} = (w_1,\ldots,w_m) \in \mathbb{Q}^{m}$ be an arbitrary vector.
We call $\vect{w}$ the weight vector.
Write
\[
W := \trop{f}(\vect{w}) = \min_{a_\vect{u} \neq 0}\set{v(a_\vect{u}) + \vect{u} \cdot \vect{w}}.
\]
\begin{definition}\label{initform}
The \emph{initial form} $\text{in}_\vect{w}(f) \in \Bbbk[x_1^{\pm 1}, \ldots, x_m^{\pm 1}]$ of $f := \sum_{\vect{u} \in \mathbb{Z}^m} a_\vect{u}\vect{x}^\vect{u}$ with respect to the weight vector $\vect{w}$ is:
\[
\text{in}_\vect{w}(f) := \sum_{\substack{\vect{u} \in \mathbb{Z}^m \\
v(a_\vect{u}) + \vect{u} \cdot \vect{w} = W}}
\overline{t^{-v(a_\vect{u})} a_\vect{u}} \vect{x}^\vect{u} = 
\overline{t^{-W} f(t^{w_1}x_1,\ldots,t^{w_m}x_m)}.
\]
The second characterization here is only valid when $\vect{w} \in (v(k^\ast))^m$ since otherwise the $t^{w_i}$ are not defined.
\end{definition}
\begin{example}\label{E3}
Let $f(x_1,x_2) = 2t + \left(t^{-1} + 3t^3 \right)x_1 + \left(7 - t^{1000} \right)x_2 - 6x_1^2 + 4t^{-2}x_1x_2 \in \mathbb{C}\{\{t\}\}\left[x_1^{\pm 1},x_2^{\pm 1} \right]$ so that $\trop{f}(\vect{u}) = \min\set{1,u_1 - 1,u_2,2u_1,u_1+u_2 - 2}$.
If $\vect{w}_1 = (-2,0)$, then $\trop{f}(\vect{w}) = -4$ and this value is met at the monomials $-6x_1^2$ and $4t^{-2}x_1x_2$, so the initial form is as follows:
\[
\text{in}_{\vect{w}}(f) = \overline{t^0(-6)}x_1^2 + \overline{t^2(4t^{-2})}x_1x_2 = -6x_1^2 + 4x_1x_2. 
\]
If instead we consider $\vect{w} = (0,2)$, then $\trop{f}(\vect{w}) = -1$ is met solely at the monomial $(t^{-1} + 3t^3)x_1$.
Thus, the initial form in this case is
\[
\text{in}_{\vect{w}}(f) = \overline{t(t^{-1} + 3t^3)}x_1 =  \overline{1 + 3t^4}x_1 = x_1 
\]
\end{example}

Now we can define the initial ideal of an ideal in $k[x_1^{\pm 1},\ldots,x_m^{\pm 1}]$:
\begin{definition}
The initial ideal of an ideal $I \subseteq k[x_1^{\pm 1},\ldots,x_m^{\pm 1}]$ with respect to $\vect{w} \in \mathbb{Q}^m$ is 
\[
\text{in}_\vect{w}(I) := \langle \text{in}_\vect{w}(f) \mid f \in I \rangle.
\]
\end{definition}
The construction of initial ideals is similar in spirit to the theory of Gr\"obner bases in a polynomial ring, but when we consider Laurent polynomials, any monomial is a unit in the ring. Thus, if $\text{in}_\vect{w}(f)$ is a unit for any $f \in I$, then $\text{in}_\vect{w}(I) = k[x_1^{\pm 1},\ldots,x_m^{\pm 1}]$. We are now able to state the Fundamental Theorem.
\begin{theorem} \label{FundThm} \emph{Fundamental Theorem of Tropical Geometry.} 
Let $k$ be an algebraically closed field with a nontrivial valuation $v$ and let $I \subseteq k[x_1^{\pm 1},\ldots,x_m^{\pm 1}]$ be an ideal with associated variety $V(I) = \set{\vect{x} \in (k^\ast)^m \mid f(\vect{x}) = 0 \text{ for all } f \in I}$ in $\left(k^* \right)^m$. Then the following subsets of $\mathbb{Q}^m$ coincide:
\begin{enumerate}
\item $\trop{V(I)} := \bigcap_{f \in I} \trop{V(f)}$;
\item $\set{\vect{w} \in \mathbb{Q}^m \mid \text{in}_\vect{w}(I) \neq k[x_1^{\pm 1},\ldots,x_m^{\pm 1}]}$;
\item The closure in $\mathbb{Q}^m$ of the set
\[
v(V(I)) := \set{(v(x_1),\ldots,v(x_m)) \mid (x_1,\ldots,x_m) \in V(I)}.
\]
\end{enumerate}
\end{theorem}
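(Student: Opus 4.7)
The plan is to establish the chain of inclusions $(3) \subseteq (1) \subseteq (2) \subseteq (3)$, which is the standard strategy.

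For $(3) \subseteq (1)$, I would take $\vect{x} \in V(I)$ and set $\vect{w} := (v(x_1),\ldots,v(x_m))$. For any $f = \sum a_\vect{u} \vect{x}^\vect{u} \in I$, the identity $f(\vect{x}) = 0$ combined with the standard valuation inequality $v(a+b) \geq \min(v(a), v(b))$, with equality when the minimum is uniquely attained, forces the minimum of $\{v(a_\vect{u}) + \vect{u} \cdot \vect{w}\}$ to be attained at least twice. This is exactly the condition $\vect{w} \in \trop V(f)$. Since each $\trop V(f)$ is closed in $\mathbb{Q}^m$, the closure of $v(V(I))$ lies in $\bigcap_{f \in I} \trop V(f) = \trop V(I)$.

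For $(1) \subseteq (2)$ I argue contrapositively: if $\text{in}_\vect{w}(I) = k[x_1^{\pm 1},\ldots,x_m^{\pm 1}]$, then $1 \in \text{in}_\vect{w}(I)$. A Gr\"obner-theoretic lemma, obtained by fixing a monomial order refining $\vect{w}$ and extracting a Gr\"obner basis, produces some $f \in I$ whose initial form $\text{in}_\vect{w}(f)$ is a single monomial. For such $f$, the minimum defining $\trop f(\vect{w})$ is uniquely attained, so $\vect{w} \notin \trop V(f)$ and hence $\vect{w} \notin \trop V(I)$.

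The main obstacle is $(2) \subseteq (3)$, the tropical lifting lemma. Assume $\text{in}_\vect{w}(I)$ is a proper ideal. Since $k$ is algebraically closed, the residue field $\Bbbk$ is too, so $V(\text{in}_\vect{w}(I)) \subseteq (\Bbbk^*)^m$ is nonempty by the Nullstellensatz. The goal is to lift any such residue-field solution to a genuine point of $V(I)$ with coordinate valuations arbitrarily close to $\vect{w}$. The standard proof proceeds by induction on the Krull dimension of $V(I)$: the zero-dimensional base case applies Hensel's lemma in the completion of the valuation ring $R$ to lift residue-field roots, while the inductive step intersects $V(I)$ with a generic hyperplane to reduce dimension, using a density argument to guarantee the slice contains a point whose coordinate valuations approach $\vect{w}$. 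Both the algebraic closure of $k$ and the denseness of $v(k^*)$ in $\mathbb{Q}$, the latter reflected in taking the closure in (3), enter essentially at this step.
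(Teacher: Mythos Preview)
The paper does not supply its own proof of Theorem~\ref{FundThm}; it is stated as background with citations to \cite{SS}, Theorem~2.1 and \cite{MS}, Theorem~3.2.3, and the surrounding text only comments on the hypotheses. So there is no in-paper argument to compare your proposal against.

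That said, your outline is the standard argument one finds in the cited references, particularly \cite{MS}. The chain $(3)\subseteq(1)\subseteq(2)\subseteq(3)$ is exactly the structure used there, and your sketches of the first two inclusions are accurate. For the lifting step $(2)\subseteq(3)$, your summary captures the spirit of the Maclagan--Sturmfels proof, though the details are somewhat more delicate than a one-line invocation of Hensel's lemma: one works with a tilted term order, passes to a flat degeneration over the valuation ring, and uses the structure theorem for initial ideals together with a careful choice of generic linear slice. If you intend this as more than a sketch, that step would need substantial expansion; as a roadmap pointing to \cite{MS} it is fine.
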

We have in this theorem the hypothesis that $v$ is nontrivial, but this is not particularly restrictive. In fact, if $I$ is a subvariety of a torus $(k^\ast)^m$ and $K$ is a valued extension of $k$ (i.e. $v_K|_k = v_k$), then the tropicalization of $I$ in $(k^\ast)^m$ is equal to the tropicalization of $I$ in $(K^\ast)^m$ (cf. \cite{MS}, Theorem 3.2.4).
Consider for example how the trivial valuation on $\mathbb{C}$ can be extended to the $t$-adic valuation on $\mathbb{C}\{\{t\}\}$.
As a final comment, it is worth mentioning that the work we did in Example \ref{E3} agrees with this theorem: the vector $(-2,0)$ lies on the tropical hyperplane and its initial form is a binomial, while the vector $(0,2)$ does not and its initial form is a monomial, which is a unit in $k[x_1^{\pm 1},\ldots,x_m^{\pm 1}]$.

We now describe how Theorem \ref{FundThm} can be extended to the tropicalization of toric varieties in the spirit of \cite{Ka} and \cite{Pay}.

If $f = \sum c_{\vect{u}}x^\vect{u} \in k[x_1,\ldots,x_m]$ and $\vect{w} \in \overline{\mathbb{Q}}^m$,
we define 
\[
\trop(f)(\vect{w}) := \min\set{v(c_{\vect{u}}) + \vect{w} \cdot \vect{u} : \vect{u} \in \left(\mathbb{N} \cup \set{0} \right)^m} 
\]
with the convention that if $w_i = \infty$, then $w_i \cdot u_i = \infty$ when $u_i \neq 0$ and $w_i \cdot u_i = 0$ when $u_i = 0$.
When $\trop(f)(\vect{w}) < \infty$, then we define the initial form $\init_\vect{w}(f)$ exactly as in Definition \ref{initform}. If $\trop(f)(\vect{w}) = \infty$, then we define $\init_{\vect{w}}(f) = 0$.
The ideal $\init_\vect{w}(I) = \langle \init_\vect{w}(f) : f \in I \rangle \in \Bbbk[x_1,\ldots,x_m]$ is defined as before.

We establish one final piece of notation before stating the theorem.
If $\sigma \subseteq \set{1,2,\ldots,n}$ and $\vect{w} \in \mathbb{Q}^{m-|\sigma|}$ is indexed by $\set{i : i \notin \sigma}$, then we write $\vect{w} \times \infty^\sigma \in \mathbb{Q}^m$ to be the vector that is $w_i$ when the coordinate $i \notin \sigma$ and $\infty$ otherwise.
If $\Sigma \subseteq \mathbb{Q}^{m - |\sigma|}$, then $\Sigma \times \infty^\sigma := \set{\vect{w} \times \infty^\sigma : \vect{w} \in \Sigma}$. Now we can state the theorem:

\begin{theorem}[cf. \cite{MS}, Theorem 6.2.15] Let $Y$ be a subvariety of $\mathbb{A}^m$, and let $I \subseteq k[x_1,\ldots,x_m]$ be its ideal. Then the following subsets of $\overline{\mathbb{Q}}^m = \trop(\mathbb{A}^m)$ coincide:
\begin{enumerate}
\item $\bigcap_{f \in I} \trop(V(f))$;
\item the set of vectors $\vect{w} \in \mathbb{Q}^m$ for which $\init_{\vect{w}}(I) \subseteq \Bbbk[x_1,\ldots,x_m]$ does not contain a monomial; and
\item the set
\[
\bigcup_{\sigma \subseteq \set{1,\ldots,m}} \trop(Y \cap O_\sigma) \times \infty^\sigma,
\]
where $O_\sigma = \set{\vect{x} \in \mathbb{A}^m : x_i = 0 \text{ for } i \in \sigma, \text{ and } x_j \neq 0 \text{ for } j \notin \sigma}$.
\end{enumerate}
\end{theorem}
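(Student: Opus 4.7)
The plan is to prove the three-way equivalence by stratifying $\overline{\mathbb{Q}}^m$ according to which coordinates equal $\infty$, and reducing each stratum to the classical Fundamental Theorem (Theorem \ref{FundThm}) applied to $Y \cap O_\sigma$ inside the torus $O_\sigma \cong (k^\ast)^{m-|\sigma|}$. Every $\vect{w} \in \overline{\mathbb{Q}}^m$ has a unique decomposition $\vect{w} = \vect{w}' \times \infty^\sigma$ with $\sigma = \set{i : w_i = \infty}$ and $\vect{w}' \in \mathbb{Q}^{m-|\sigma|}$, so every point lies in exactly one stratum, and it suffices to verify the equivalence on each stratum separately.

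The key compatibility is the following. For any $f = \sum c_\vect{u} x^\vect{u} \in k[x_1,\ldots,x_m]$ and $\vect{w}$ as above, any monomial with $u_i > 0$ for some $i \in \sigma$ contributes $\infty$ to $\trop(f)(\vect{w})$, so the tropical minimum is achieved only among monomials supported outside $\sigma$. Writing $f|_\sigma$ for the image of $f$ under $x_i \mapsto 0$ for $i \in \sigma$, I would deduce that $\init_\vect{w}(f) = \init_{\vect{w}'}(f|_\sigma)$ inside $\Bbbk[x_j : j \notin \sigma]$. Letting $I|_\sigma := \set{f|_\sigma : f \in I}$ be the corresponding polynomial ideal, this gives $\init_\vect{w}(I) = \init_{\vect{w}'}(I|_\sigma)$. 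Its localization $I_\sigma$ at $\prod_{j \notin \sigma} x_j$ is the ideal of $Y \cap O_\sigma$ inside $O_\sigma$. A clearing-denominators argument then shows that $\init_{\vect{w}'}(I|_\sigma)$ contains a monomial if and only if $\init_{\vect{w}'}(I_\sigma)$ does: the forward direction is just inclusion, while for the reverse, multiplying any Laurent element by a suitable $x^\vect{v}$ produces a polynomial whose initial form differs by the same monomial factor and therefore remains a monomial.

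With these correspondences in hand, conditions (1), (2), and (3) for $\vect{w}$ translate stratum-by-stratum into the analogous conditions of Theorem \ref{FundThm} for $\vect{w}'$ and $I_\sigma$: (2) by the initial-ideal comparison, (1) because $\init_\vect{w}(f)$ is a monomial precisely when $\init_{\vect{w}'}(f|_\sigma)$ is, and (3) tautologically. Their equivalence then follows from the classical result, and assembling over all $\sigma \subseteq \set{1,\ldots,m}$ yields the theorem. I expect the ideal-comparison step to be the main obstacle: the reverse implication, that a monomial in $\init_{\vect{w}'}(I_\sigma)$ yields a monomial in $\init_{\vect{w}'}(I|_\sigma)$, must be handled carefully because the localization is precisely what isolates the torus orbit from the ambient affine space, and it is this step that allows Theorem \ref{FundThm}, formulated for subvarieties of a torus, to be invoked on each stratum.
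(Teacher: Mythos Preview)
The paper does not supply a proof of this statement: it is quoted as Theorem~6.2.15 of \cite{MS} and left unproved, serving only as background for the spherical generalization in \S\ref{extGrobtrop}. So there is no ``paper's own proof'' to compare against.

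Your proposal is correct and is essentially the argument in \cite{MS}. The stratification of $\overline{\mathbb{Q}}^m$ by the infinity locus $\sigma$, the identification $\init_{\vect{w}}(f)=\init_{\vect{w}'}(f|_\sigma)$, and the reduction to Theorem~\ref{FundThm} on each torus orbit $O_\sigma$ are exactly the right moves. Two small points of care you should make explicit when writing it out. First, $\init_{\vect{w}}(I)$ lives in $\Bbbk[x_1,\dots,x_m]$ while $\init_{\vect{w}'}(I|_\sigma)$ lives in $\Bbbk[x_j:j\notin\sigma]$; what you really have is that the former is the extension of the latter, and you should check (it is a one-line flatness/coefficient argument) that a monomial lies in the extended ideal if and only if one lies in the contracted ideal. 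Second, for the localization step you flagged, the clean formulation is $\init_{\vect{w}'}(I_\sigma)=\init_{\vect{w}'}(I|_\sigma)\cdot\Bbbk[x_j^{\pm1}:j\notin\sigma]$, which follows from $\init_{\vect{w}'}(x^{\vect{v}}f)=x^{\vect{v}}\init_{\vect{w}'}(f)$; then ``contains a monomial'' on the polynomial side is equivalent to ``equals the whole ring'' on the Laurent side by the usual clearing-denominators argument. With those two clarifications your outline goes through without difficulty.
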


This statement is restricted to subvarieties of affine space, but it can be extended to arbitrary toric varieties by tropicalizing a quotient of a subvariety of the Cox ring of the variety. This is described explicitly in \cite{MS}, Corollary 6.2.16.

\section{Spherical Gr\"obner tropicalization}\label{Grobtrop}

We now describe the Kaveh-Manon notion of the tropicalization of a spherical homogeneous space. 
We will give an overview of the relevant points of \textsection 4 of \cite{KM}, which is relatively self-contained.
There is a great deal of general theory on this subject from \cite{KM} that will be omitted.

Let $G/H$ be a spherical homogeneous space and let $B \leq G$ be a Borel subgroup with associated palette $\mathcal{D}$.
We denote the open orbit of $B$ in $G/H$ by $(G/H)_{B}$, and it is given as follows:
\[
(G/H)_{B} = (G/H) \setminus \bigcup_{D \in \mathcal{D}} D.
\]

For fixed $v \in \mathcal{V}$ and $a \in \mathbb{Q}$, define $k\left[(G/H)_{B}\right]_{v \geq a} := \set{f \in k\left[(G/H)_{B}\right] : v(f) \geq a}$ and similarly $k\left[(G/H)_{B}\right]_{v > a}$. We define a graded algebra $\gr_v(k\left[(G/H)_{B} \right])$ as follows:
\begin{equation}\label{associatedgraded}
\gr_v\left(k\left[(G/H)_{B}\right]\right) = \bigoplus_{a \in \mathbb{Q}} k\left[(G/H)_{B}\right]_{v \geq a}/k\left[(G/H)_{B}\right]_{v > a}.
\end{equation}
(We note that there is an oversight in \cite{KM} as to this definition: the direct sum is taken over $\mathbb{Q}_{\geq 0}$ rather than $\mathbb{Q}$; $\mathbb{Q}$ is the correct notion.)
Let $f \in k\left[(G/H)_{B}\right]$ and $v \in \mathcal{V}$ and write $v(f) = a$.
Then we define $\init_v(f)$ to be the quotient of $f$ in $k\left[(G/H)_{B}\right]_{v \geq a}/k\left[(G/H)_{B}\right]_{v > a}$.
If $J_B \subseteq k\left[(G/H)_{B}\right]$ is an ideal, we define 
\[
\init_v(J_B) := \left\langle \init_v(f) : f \in J_B \right\rangle \subseteq \gr_v\left( k \left[ (G/H)_{B} \right] \right).
\]
We write $\mathcal{V}_{B}(J_B)$ to denote the set of $v \in \mathcal{V}$ such that $\init_v(J_B) \neq \gr_v\left(k\left[(G/H)_{B}\right]\right)$.
\begin{definition}
Let $Y \subseteq G/H$ be a closed subvariety defined by an ideal $J \subseteq k[G/H]$. For each Borel subgroup $B \leq G$, let $J_B \subseteq k\left[(G/H)_{B}\right]$ denote the ideal that defines $Y \cap (G/H)_{B}$ as a subvariety of $(G/H)_{B}$.
Then we define the \emph{Gr\"obner tropicalization} of $Y$ to be
\[
\trop_G^{\gr}(Y) := \bigcup_{B} \mathcal{V}_{B}\left(J_B \right),
\]
where the union is indexed over all Borel subgroups $B$ of $G$.
\end{definition}
Kaveh and Manon show (\cite{KM}, Proposition 4.10) that in fact we need only take the union over a finite number of Borel subgroups of $G$.
The following theorem and example illustrate the necessity of taking a union of multiple Borel subgroups.
Ultimately, we want to show that Gr\"obner tropicalization agrees with Vogiannou's tropicalization, and valuations lying in Vogiannou's tropicalization can be missed if not enough Borel subgroups are used.

\begin{theorem}[\cite{KM}, Theorem 4.6]\label{wheninGrob}
Let $Y \subseteq (G/H)_{B}$ be a subvariety defined by an ideal $J \subseteq k \left[(G/H)_{B}\right]$. Let $v \in \mathcal{V}$ be a valuation, let $X$ be the $G/H$-embedding whose colored fan consists of the ray spanned by $v$, and let $\mathcal{O}$ be the closed $G$-orbit in $X$.
Then $v$ lies in $\mathcal{V}_B(J)$ if and only if the closure of $Y$ in $X$ intersects the open $B$-orbit of $\mathcal{O}$.
\end{theorem}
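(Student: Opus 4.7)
The plan is to identify $\gr_v(k[(G/H)_{B}])$ with a natural graded avatar of $k[(G/H)_B]$ under which $\init_v(J)$ encodes exactly the locus where the closure of $Y$ meets the open $B$-orbit $\mathcal{O}_B$ of the closed $G$-orbit $\mathcal{O}$ in $X$.

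First I would exploit the multiplicity-freeness of the $B$-action on $(G/H)_B$: since $B$ acts transitively, each $B$-weight space of $k[(G/H)_B]$ is at most one-dimensional, and each nonzero $B$-semi-invariant $f_\chi$ is a unit, because its zero locus would be a proper $B$-stable subset of a single $B$-orbit. This yields a decomposition $k[(G/H)_B] = \bigoplus_{\chi \in \mathcal{M}} k \cdot f_\chi$ with $v(f_\chi) = \chi(v)$. Combining $G$-invariance of $v$ with the $T$-action arising from $T \subset B$ to rule out cancellation across distinct weights, I would then prove the key homogeneity relation
\[
v\Bigl(\sum_\chi c_\chi f_\chi\Bigr) = \min\set{\chi(v) : c_\chi \neq 0}.
\]
This relation is the technical core; once established, it makes the valuation filtration on $k[(G/H)_B]$ completely transparent.

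From the homogeneity relation I would read off $\gr_v(k[(G/H)_B])$ as $k[(G/H)_B]$ itself, equipped with the grading $\deg f_\chi = \chi(v) \in \mathbb{Q}$. The zeroth graded piece $\bigoplus_{\chi(v) = 0} k f_\chi$ would be identified with $k[\mathcal{O}_B]$ via the restriction isomorphism $\tau^\perp \cap \mathcal{M} \cong \mathcal{M}(\mathcal{O})$ from Theorem 6.3 of \cite{Kn} (invoked earlier in \textsection\ref{construction}) applied to $\tau = \mathbb{Q}_{\geq 0}v$; the full $\gr_v$ then becomes the coordinate ring of a $\mathbb{G}_m$-bundle over $\mathcal{O}_B$. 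Since $\init_v(J)$ is homogeneous under this grading, it contains $1$ if and only if its degree-zero part equals all of $k[\mathcal{O}_B]$.

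The final and most delicate step is to match the degree-zero part of $\init_v(J)$ with the ideal of $\overline{Y} \cap \mathcal{O}_B$ in $k[\mathcal{O}_B]$. I would realize $\spec \gr_v$ as the central fiber of the Rees-algebra flat family $\bigoplus_a t^{-a} k[(G/H)_B]_{v \geq a}$ degenerating $(G/H)_B$, working inside the $B$-chart $X_B = (G/H)_B \cup \mathcal{O}_B$ of the simple embedding $X$. In this family, $\overline{Y} \cap X_B$ is flat over the parameter, and its central fiber is cut out by $\init_v(J)$, whose degree-zero slice recovers $\overline{Y} \cap \mathcal{O}_B$. The biconditional then follows: $\init_v(J)$ is a proper ideal of $\gr_v(k[(G/H)_B])$ if and only if its degree-zero part is proper in $k[\mathcal{O}_B]$, if and only if $\overline{Y} \cap \mathcal{O}_B$ is nonempty. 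The main obstacle is the flatness comparison here, namely identifying the Rees deformation's central fiber with $\mathcal{O}_B$ and the flat limit of $Y$ with $\overline{Y} \cap \mathcal{O}_B$; both depend on the $B$-chart structure of the simple embedding and on $v$ being proportional to the valuation of the Cartier divisor $\mathcal{O}_B \subset X_B$.
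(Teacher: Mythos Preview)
The paper does not supply its own proof of this theorem; it is quoted verbatim from \cite{KM}, Theorem~4.6, and used only as input for the example that follows and for Theorem~\ref{TheFinalThm}. So there is no in-paper argument to compare against, and your sketch should be measured against the Kaveh--Manon proof rather than anything here.

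That said, your outline has a genuine gap at the very first step. You assert that because $B$ acts transitively on $(G/H)_B$, one obtains a decomposition
\[
k\bigl[(G/H)_B\bigr] \;=\; \bigoplus_{\chi \in \mathcal{M}} k\cdot f_\chi.
\]
Transitivity does force each $B$-\emph{eigenspace} to be at most one-dimensional, but it does \emph{not} force the $B$-eigenfunctions to span. Already in Example~\ref{EX} one has $(G/H)_B = \{(x,y)\in\mathbb{A}^2 : y\neq 0\}\cong B$, so $k[(G/H)_B]\cong k[x,y^{\pm 1}]$, whereas the $B$-semi-invariants are only the powers of $y$; the coordinate $x$ is not a $B$-eigenfunction. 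Consequently the ``homogeneity relation'' you write down, even if true on sums of $B$-eigenfunctions, does not determine $v$ on all of $k[(G/H)_B]$, and the identification of $\gr_v$ with a regraded copy of $k[(G/H)_B]$ collapses.

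The repair, and what Kaveh--Manon actually use, is the local structure theorem (Knop \cite{Kn}, Theorem~2.3 and its consequences): there is a parabolic $P=L\ltimes P_u$ adapted to $G/H$ and an $L$-stable closed subvariety $Z\subseteq (G/H)_B$ such that the action map $P_u\times Z \to (G/H)_B$ is an isomorphism, with $Z$ a torus whose character lattice is $\mathcal{M}$. Thus $k[(G/H)_B]\cong k[P_u]\otimes k[Z]$, and the $G$-invariant valuation $v$ is trivial on the unipotent factor and given by $\chi\mapsto\langle v,\chi\rangle$ on the toric factor $k[Z]=\bigoplus_{\chi\in\mathcal{M}}k\cdot\chi$. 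With this in hand your remaining steps (identifying $\gr_v$, the degree-zero piece with $k[\mathcal{O}_B]$, and the Rees-degeneration comparison with $\overline{Y}\cap\mathcal{O}_B$) go through essentially as you describe, and this is the route taken in \cite{KM}. The missing ingredient is precisely this structural input; multiplicity-freeness of the $B$-action alone is not enough.
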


\begin{example}[cf. \cite{Ga}, Example 4.3]
Let $G = \Sl_2$, $H$ be the diagonal torus, and $B$ be the upper triangular matrices.
Then $G/H \cong \left(\mathbb{P}^1 \times \mathbb{P}^1 \right) \setminus \mathcal{O}$, where $\mathcal{O}$ denotes the diagonal and $G$ acts naturally on each copy of $\mathbb{P}^1$.
The map $G \rightarrow \left(\mathbb{P}^1 \times \mathbb{P}^1 \right) \setminus \mathcal{O}$ is given as follows; its kernel is $H$:
\[
\left(\begin{array}{cc}
x_{11} & x_{12} \\
x_{21} & x_{22}
\end{array} \right) \mapsto
\left(\left[\begin{array}{c} x_{11} \\ x_{21} \end{array} \right],
\left[\begin{array}{c} x_{12} \\ x_{22} \end{array} \right] \right).
\]
Then the palette $\mathcal{D}$ consists of two colors: $V(x_{21})$ and $V(x_{22})$.
The $B$ semi-invariant rational functions are integer powers of $g := (x_{11}x_{22} - x_{12}x_{21})/x_{21}x_{22}$ and the lattice $\mathcal{N}$ is isomorphic to $\mathbb{Z}$. We claim $\mathcal{V}$ is the ray generated by the $G$-invariant valuation 
\[
f \mapsto \text{order of vanishing of $f$ along $\mathcal{O}$}.
\]
Note that $g \mapsto 1$ under this valuation.
We show that no valuation sending $g$ to a negative number can be $G$-invariant.
Note first that
\[
\left(\begin{array}{cc}
0 & 1 \\
-1 & 0
\end{array} \right) \cdot \frac{x_{22}}{x_{12}} = -\frac{x_{12}}{x_{22}} \quad \text{and} \quad \left(\begin{array}{cc}
0 & 1 \\
-1 & 0
\end{array} \right) \cdot \frac{x_{21}}{x_{11}} = -\frac{x_{11}}{x_{21}}
\]
This implies any $G$-invariant valuation must take both $\frac{x_{12}}{x_{22}}$ and $\frac{x_{11}}{x_{21}}$ to 0.
Thus, if $v$ is a $G$-invariant valuation, we have that
\[
v(g) = v\left(\frac{x_{11}x_{22} - x_{12}x_{21}}{x_{21}x_{22}} \right) = v\left( \frac{x_{11}}{x_{21}} - \frac{x_{12}}{x_{22}} \right) \geq \min\set{v\left(\frac{x_{11}}{x_{21}} \right), v\left(\frac{x_{12}}{x_{22}} \right)} = 0.
\]
Therefore, any $G$-invariant valuation must take $g$ to a non-negative number.

Both colors map to the same point outside the valuation cone and the only $G/H$-embedding is $X \cong \mathbb{P}^1 \times \mathbb{P}^1$, whose colored fan is the ray $\mathcal{V}$.
The data from the homogeneous space is shown in Figure \ref{GrobTropEx}.
\begin{center}
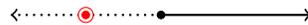
\begin{figure}[h]
\begin{tikzpicture}
%\draw[fill,gray!25] (-2,-2)--(2,-2)--(2,2);
%\draw[thick,->] (0,0)--(2,2);
\draw[fill] (0,0) circle(.05);
%\draw[thick,->] (0,0)--(-2,-2);

\draw[->,dotted,thick] (0,0)--(-2,0);
\draw[->,thick] (0,0)--(2,0);

\draw[red,fill = white] (-1,0) circle(.1);
\draw[red,fill = red] (-1,0) circle(.05);
\end{tikzpicture}
\caption{Spherical data for $\left( \mathbb{P}^1 \times \mathbb{P}^1 \right) \setminus \mathcal{O}$; $\mathcal{V}$ is the ray to the right and both colors lie on the left}\label{GrobTropEx}
\end{figure}
\end{center}

We have that
\begin{align*}
(G/H)_{B} & = (G/H) \setminus \set{V(x_{21}),V(x_{22})} \\
& = \set{\left([x_{11} : x_{21} ], [x_{12} : x_{22} ]\right) : x_{21},x_{22},x_{11}x_{22} - x_{12}x_{21} \neq 0} \\
& =  \set{([x_{11} : 1 ], [x_{12} : 1 ]) : x_{11} \neq x_{12}} \\
& \cong \mathbb{A}^2 \setminus \Delta.
\end{align*}
Here, $\Delta$ represents the diagonal of $\mathbb{A}^2$.
This tells us that $k\left[(G/H)_{B} \right] \cong k[x_{11},x_{12},(x_{11}-x_{12})^\pm]$.

Let us consider $J = (x_{21})$ and the corresponding subvariety $Y := V(x_{21}) \subset G/H$. 
The closure of $Y$ in $X$ intersects $\mathcal{O}$ in the point $([1:0],[1:0])$, which does not lie in the open $B$-orbit of $\mathcal{O}$.
Thus, Theorem \ref{wheninGrob} tells us $\gr_B(J_B)$ consists solely of the trivial valuation since it can contain no other valuation along the ray $\mathcal{V}$.

If instead we had used the Borel subgroup $B'$ of lower triangular matrices, then the resulting palette $\mathcal{D}'$ would consist of $V(x_{11})$ and $V(x_{12})$ and we would still have $(G/H)_{B'} \cong \mathbb{A}^2 \setminus \Delta$.
The important difference is that in this case we have:
\[
(G/H)_{B'} = \set{([1 : x_{21} ], [1 : x_{22} ]) : x_{21} \neq x_{22}}.
\]
The subvariety $Y$ still intersects the closed $G$-orbit of $X$ in the point $([1:0],[1:0])$, but this point lies in the open $B'$-orbit.
Thus, $\gr_{B'}(J_{B'})$ consists of the valuation cone $\mathcal{V}$, which is the tropicalization of $V(x_{21})$.
In general, if $Y \subseteq G/H$ is cut out by an ideal $J$, we have $\trop_G^{\gr}(Y) = \mathcal{V}_B(J_B) \cup \mathcal{V}_{B'}(J_{B'})$.
\end{example}

This notion of spherical tropicalization via Gr\"obner theory agrees with Vogiannou's tropicalization:
\begin{theorem}[\cite{KM}, Theorem 5.15]\label{fundthmspherical}
Let $Y \subseteq G/H$ be a closed subvariety.
Then Gr\"obner tropicalization coincides with spherical tropicalization:
\[
\trop_G(Y) = \trop_G^{\gr}(Y).
\]
\end{theorem}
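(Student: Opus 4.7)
The plan is to prove the two inclusions $\trop_G(Y) \subseteq \trop_G^{\gr}(Y)$ and $\trop_G^{\gr}(Y) \subseteq \trop_G(Y)$ separately, using Theorem \ref{wheninGrob} as the common bridge. That theorem translates membership in $\mathcal{V}_B(J_B)$ into a geometric statement about the simple $G/H$-embedding $X_v$ whose colored fan is the ray spanned by $v$: namely, $v \in \mathcal{V}_B(J_B)$ iff the closure of $Y$ in $X_v$ meets the open $B$-orbit of the closed $G$-orbit $\mathcal{O}$ of $X_v$. This lets me mediate between $\overline{K}$-arcs into $Y$ on the Vogiannou side and orbit-incidence data on the Gr\"obner side.

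For $\trop_G(Y) \subseteq \trop_G^{\gr}(Y)$, I would take $v = \nu_\gamma$ for some $\gamma \in Y(\overline{K})$ and let $X_v$ be the associated simple embedding. Using the construction of $\nu_\gamma$ from \textsection \ref{HomSpaces}, $\gamma$ extends uniquely to a morphism $\spec R \to X_v$, where $R$ is the valuation ring of $\overline{K}$, and the closed point is sent to a point $y_0 \in \mathcal{O} \cap \overline{Y}$. Since $G$ acts transitively on $\mathcal{O}$ and every Borel subgroup has a dense orbit there, there exists $g \in G$ with $g \cdot y_0$ in the open $B$-orbit; the conjugate Borel $B' := g^{-1}Bg$ then has $y_0$ itself in its open orbit on $\mathcal{O}$. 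Theorem \ref{wheninGrob} then yields $v \in \mathcal{V}_{B'}(J_{B'}) \subseteq \trop_G^{\gr}(Y)$.

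For $\trop_G^{\gr}(Y) \subseteq \trop_G(Y)$, take $v \in \mathcal{V}_B(J_B)$ and again form $X_v$ and $\mathcal{O}$. By Theorem \ref{wheninGrob}, I may pick $y_0 \in \overline{Y}$ lying in the open $B$-orbit of $\mathcal{O}$, and the task becomes producing $\gamma \in Y(\overline{K})$ with $\nu_\gamma = v$. I would invoke the Luna--Vust local structure theorem to find a $B$-stable affine open neighborhood of $y_0$ in $X_v$ that decomposes (up to a unipotent factor) as a product with a toric variety $Z$ whose fan consists of the ray spanned by $v$. Within this toric slice, lifting $v$ to a $\overline{K}$-arc through $y_0$ is a standard Newton--Puiseux construction; restricting to $\overline{Y}$ inside the slice produces a $\overline{K}$-point whose generic point can be arranged to land in $G/H$ (since $y_0$ lies in the open $B$-orbit of $\mathcal{O}$, not in a deeper $G$-stratum), hence in $Y$. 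Matching the construction against the generators of $\mathcal{M}$ confirms that the resulting $\nu_\gamma$ equals $v$.

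The main obstacle is this lifting step in the reverse direction. The Luna--Vust decomposition provides the right toric model in principle, but two delicate points must be verified: first, that a sufficiently generic arc in $\overline{Y}$ through $y_0$ can be chosen to avoid the $G$-stable divisors of $X_v$ except at the closed point, so that it actually defines a point of $Y(\overline{K})$ rather than of $\overline{Y}(\overline{K})$; and second, that the resulting $G$-invariant valuation recovers $v$ exactly, rather than merely some other valuation on the same ray. The openness of the $B$-orbit of $\mathcal{O}$ containing $y_0$ is crucial for the first point, while the second reduces to the toric Fundamental Theorem (Theorem \ref{FundThm}) once the calculation is transported into the toric slice. Combining the two inclusions then gives the desired equality.
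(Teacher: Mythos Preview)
The paper does not contain its own proof of this statement: Theorem~\ref{fundthmspherical} is quoted directly from \cite{KM} as their Theorem~5.15, with no argument supplied here. In the present article it functions only as a black-box input to the proof of Theorem~\ref{TheFinalThm}. There is therefore no proof in the paper against which your attempt can be compared.

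For what it is worth, your outline is a reasonable strategy and is broadly the natural one. For the inclusion $\trop_G(Y)\subseteq\trop_G^{\gr}(Y)$ your argument is essentially sound: extend $\gamma$ over the valuation ring using that $v$ lies in the fan of $X_v$ (hence has a center on $X_v$), note that the special point lands in the closed orbit $\mathcal{O}$ when $v\neq 0$ (the case $v=0$ being trivial), and conjugate the Borel so that this point lies in the open Borel orbit of $\mathcal{O}$; Theorem~\ref{wheninGrob} then applies once you replace $Y$ by $Y\cap(G/H)_{B'}$, which is exactly how $J_{B'}$ is defined. For the reverse inclusion, the Luna--Vust local structure theorem together with a toric lift is again the expected route, and the two concerns you flag---arranging the generic point of the arc to land in $G/H$ rather than a boundary divisor, and verifying that $\nu_\gamma$ recovers $v$ exactly and not merely a point on the same ray---are precisely the places where genuine work is required. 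Your sketch identifies the right ingredients but, as you acknowledge, does not yet discharge them.
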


\section{Extended Gr\"obner tropicalization}\label{extGrobtrop}

In this section we extend Gr\"obner tropicalization to encompass spherical embeddings. 
Let $X$ be a simple spherical embedding defined by a colored cone $\left( \sigma, \mathcal{F} \right)$.
Write $\mathcal{O}$ for the unique closed $G$-orbit of $X$, write $\mathcal{D}(X)$ for the set of $B$-stable prime divisors of $X$, and write $\mathcal{D}_\mathcal{O}(X)$ for the $B$-stable prime divisors of $X$ that contain $\mathcal{O}$.
There is a $B$-stable subset $X_{B}$ of $X$ defined by:
\[
X_{B} := X \setminus \bigcup_{D \in \mathcal{D}(X)\setminus \mathcal{D}_\mathcal{O}(X)} D,
\]
in other words by throwing out the $B$-stable prime divisors not containing $\mathcal{O}$.
Note that this is consistent with the earlier notation $(G/H)_B$ and that $(G/H)_B = X_B \cap (G/H)$.
This theory appears with more detail in \cite{Kn} \textsection 2, where the notation $X_0$ is used rather than $X_{B}$.
The following is a straightforward extension of Theorem 4.1 in \cite{KM}.
\begin{theorem}\label{orbitregfunctions}
Let $X$ be a simple $G/H$-embedding with closed orbit $\mathcal{O}$.
The subset $X_{B}$ is open and affine, and $\mathcal{O} \cap X_{B}$ is a $B$-orbit. Further, the regular functions on $X_{B}$ can be described as follows:
\[
k\left[X_{B}\right] = \set{f \in k\left[(G/H)_{B}\right] : \nu_D(f) \geq 0 \text{ for all } D \in \mathcal{D}_\mathcal{O}(X) }. 
\]
\end{theorem}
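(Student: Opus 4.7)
The plan is to separate the openness, affineness, and $B$-orbit statements (which are essentially Luna--Vust/Knop for simple embeddings) from the description of $k[X_B]$ (which I would handle by algebraic Hartogs after pinning down the prime divisors of $X_B$). Openness is immediate, since $X_B$ is the complement in $X$ of finitely many prime divisors. The affineness of $X_B$ and the fact that $\mathcal{O} \cap X_B$ is a single (necessarily open) $B$-orbit in $\mathcal{O}$ are part of the standard local structure of a simple spherical embedding, as developed by Knop: see \cite{Kn}, \textsection 2, where our $X_B$ is denoted $X_0$. I would simply cite these facts and record them.

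For the characterization of $k[X_B]$, I would first observe that $(G/H)_B \subseteq X_B$. A point of $(G/H)_B$ lies in $G/H$, so it avoids every $G$-stable prime divisor of $X$, and it avoids every color of $G/H$, so it avoids the closure in $X$ of every such color; together this says it avoids every element of $\mathcal{D}(X) \setminus \mathcal{D}_\mathcal{O}(X)$. Restriction therefore gives an injective homomorphism $k[X_B] \hookrightarrow k[(G/H)_B]$, and the forward inclusion of the claimed equality is then immediate: for any $D \in \mathcal{D}_\mathcal{O}(X)$, the divisor $D$ is not removed when forming $X_B$, so its generic point lies in $X_B$ and $\nu_D(f) \geq 0$ for every $f \in k[X_B]$.

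For the reverse inclusion I would invoke algebraic Hartogs on the normal variety $X_B$: a rational function $f \in k(X)$ is regular on $X_B$ iff $\nu_{\widetilde{D}}(f) \geq 0$ for every prime divisor $\widetilde{D}$ of $X_B$. Each such $\widetilde{D}$ is of the form $D \cap X_B$ for a unique prime divisor $D$ of $X$ that is not in $\mathcal{D}(X) \setminus \mathcal{D}_\mathcal{O}(X)$, so two cases remain. If $D \in \mathcal{D}_\mathcal{O}(X)$, then $\nu_D(f) \geq 0$ holds by hypothesis. Otherwise $D$ is not $B$-stable, hence not $G$-stable, so $D \cap (G/H)$ is a prime divisor of $G/H$ that fails to be $B$-stable and therefore is not a color; being irreducible of the same dimension as any color, it cannot be contained in a color either, so its generic point (which is the generic point of $D$) lies in $(G/H)_B$, where $f$ is regular. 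In either case $\nu_D(f) \geq 0$, so $f \in k[X_B]$.

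The main obstacle is this second case of the Hartogs argument: one must be sure that a non-$B$-stable prime divisor of $X$ genuinely meets $(G/H)_B$ at its generic point. This rests on the identification of colors with the $B$-stable prime divisors of $G/H$, so that a non-$B$-stable divisor cannot be swallowed up by the union of colors. Everything else reduces cleanly to the cited Luna--Vust affineness of $X_B$ and to Hartogs on a normal variety, and I expect no further serious obstruction.
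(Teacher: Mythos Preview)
Your proposal is correct and follows essentially the same approach as the paper: cite \cite{Kn}, Theorem~2.1 for openness, affineness, and the $B$-orbit claim, then use that $(G/H)_B$ is open in $X_B$ together with normality/Hartogs to identify $k[X_B]$ as the functions in $k[(G/H)_B]$ with nonnegative order along the boundary divisors $\mathcal{D}_\mathcal{O}(X)$. The paper's version is more terse, simply asserting that $X_B \setminus (G/H)_B$ is exactly the union of the $D \in \mathcal{D}_\mathcal{O}(X)$, whereas you spell out the Hartogs step and the dichotomy between $B$-stable and non-$B$-stable prime divisors of $X_B$; this extra care is harmless and arguably clarifies the point.
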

\begin{proof}
The first claims are from \cite{Kn}, Theorem 2.1. Turning to the regular functions, note that because $(G/H)_{B}$ is open in $X_{B}$, we can identify a function in $k\left[X_{B}\right]$ with its restriction to $(G/H)_{B}$.
The functions in $k\left[(G/H)_{B}\right]$ that are restrictions in this way are precisely those that do not have poles when evaluated at points in $X_{B} \setminus (G/H)_{B}$.
In other words, they are the functions $f$ such that $\nu_D(f) \geq 0$ for all $D \in \mathcal{D}_\mathcal{O}(X)$.
Conversely, if we have a function in $k\left[(G/H)_{B}\right]$, it extends (uniquely) to $k\left[X_{B}\right]$ precisely when it does not have poles along any $D \in \mathcal{D}_\mathcal{O}(X)$.
\end{proof}

The orbit $\mathcal{O}$ is a spherical homogeneous space under the action of $G$ with open $B$-orbit $\mathcal{O} \cap X_{B}$.
It follows that $\mathcal{D}(\mathcal{O}) = \set{D \cap \mathcal{O} : D \in \mathcal{D}(X) \setminus \mathcal{D}_\mathcal{O}(X)}$.

Now denote by $\mathcal{V}\left(X_{B} \right)$ the extended $G$-invariant $\overline{\mathbb{Q}}$-valuations on $k\left[(G/H)_{B}\right]$ that are finite on $k\left[X_{B}\right]$:
\[
\mathcal{V}\left(X_{B} \right) := \set{v: k\left[(G/H)_{B}\right] \rightarrow \overline{\mathbb{Q}} : k\left[X_{B}\right] \subseteq v^{-1}(\mathbb{Q}) }.
\]
Observe that $\mathcal{V}\left(X_{B} \right)$ can be identified with a subset of $\overline{\mathcal{N}}_\mathcal{V}(\sigma)$.
Indeed, a $G$-invariant $\overline{\mathbb{Q}}$-valuation on $k\left[(G/H)_{B}\right]$ induces a semigroup homomorphism $\sigma^\vee \cap \mathcal{M} \rightarrow \overline{\mathbb{Q}}$ in the same way a $G$-invariant $\overline{\mathbb{Q}}$-valuation on $k\left[G/H\right]$ does.
The only difference is that valuations on $k\left[(G/H)_{B}\right]$ cannot take infinite values on functions cutting out the $B$-stable divisors because these functions are invertible in $k\left[(G/H)_B \right]$.

For any $v \in \mathcal{V}\left(X_{B} \right)$, we define $\gr_v\left(k\left[X_{B}\right]\right)$ as follows (cf. Equation (\ref{associatedgraded})).
\begin{equation}\label{extended grobner def}
\gr_v\left(k\left[X_{B}\right]\right) = k\left[(G/H)_{B}\right]_{v = \infty} \oplus \bigoplus_{a \in\mathbb{Q}} k\left[(G/H)_{B}\right]_{v \geq a}/k\left[(G/H)_{B}\right]_{v > a},
\end{equation}
where $k\left[(G/H)_{B}\right]_{v = \infty} := \set{f \in k\left[(G/H)_B \right] : v(f) = \infty}$.
If we assert $k\left[(G/H)_{B}\right]_{v > \infty} := \set{0}$, then we may write
\[
\gr_v\left(k\left[X_{B}\right]\right) = \bigoplus_{a \in \overline{\mathbb{Q}}} k\left[(G/H)_{B}\right]_{v \geq a}/k\left[(G/H)_{B}\right]_{v > a}.
\]
Note that if $v \in \mathcal{V}$, the first summand of Equation (\ref{extended grobner def}) vanishes and $\gr_v\left(k\left[X_{B}\right]\right) = \gr_v\left(k\left[G/H_{B}\right]\right)$.
For $f \in k\left[X_{B}\right]$, $J \subseteq k\left[X_{B}\right]$, and $v \in \mathcal{V}(X_B)$, we define $\init_v(f)$ and $\init_v(J)$ as before.
Similarly, $\mathcal{V}_B(J)$ is the set of $v \in \mathcal{V}(X_B)$ such that $\init_v(J) \neq \gr_v\left(k\left[X_{B}\right]\right)$. 
Now we can define the extended Gr\"obner tropicalization of a subvariety of a spherical embedding:
\begin{definition}
Let $X$ be a simple $G/H$-embedding associated to a colored cone $\left(\sigma, \mathcal{F} \right)$ and $Y \subseteq G/H$ a closed subvariety defined by an ideal $J \subseteq k[G/H]$. Let $\overline{Y} \subseteq X$ denote the closure of $Y$ in $X$. For each Borel subgroup $B \leq G$, let $\overline{J}_B \subseteq k\left[X_{B}\right]$ denote the ideal that defines $\overline{Y} \cap X_{B}$ as a subvariety of $X_{B}$.
The \emph{extended Gr\"obner tropicalization} of $\overline{Y}$ in $\overline{\mathcal{N}}_\mathcal{V}(\sigma)$ is:
\[
\trop_G^{\gr}\left( \overline{Y} \right) := \bigcup_{B} \mathcal{V}_{B}\left( \overline{J}_B \right).
\]
In particular, if $Y = G/H$, then
\[
\trop_G^{\gr}(X) = \bigcup_B \mathcal{V}_B(0).
\]
If $X$ is a non-simple $G/H$-embedding, we define $\trop_G^{\gr}\left( \overline{Y} \right)$ as follows. For each simple $G/H$-embedding $X' \subseteq X$, compute $\trop_G^{\gr}\left( \overline{Y} \cap X' \right) \subseteq \trop_G^{\gr}(X')$ and glue together along shared valuations.  
\end{definition}

We finally prove an extended fundamental theorem equating our two notions of extended spherical tropicalization.
We first prove the statement for simple embeddings and then deduce the full result as a corollary.

\begin{theorem}\label{TheFinalThm}
Let $X$ be a simple $G/H$-embedding and $Y \subseteq G/H$ be a closed subvariety. Then 
\[
\trop_G(\overline{Y}) = \trop_G^{\gr}(\overline{Y})
\]
as subspaces of $\overline{\mathcal{N}}_\mathcal{V}(\sigma)$, where the closure $\overline{Y}$ is taken in $X$.
\end{theorem}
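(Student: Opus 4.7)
The plan is to stratify both tropicalizations along the decomposition $\overline{\mathcal{N}}_\mathcal{V}(\sigma) = \bigsqcup_{\tau \preceq \sigma} \mathcal{N}_\mathbb{Q}(\mathcal{O}_\tau)$ and reduce, stratum by stratum, to the Fundamental Theorem for homogeneous spaces (Theorem \ref{fundthmspherical}) applied to each orbit $\mathcal{O}_\tau$. By construction, the Vogiannou side splits on the nose: $\trop_G(\overline{Y}) \cap \mathcal{V}_\tau = \trop_G(\overline{Y} \cap \mathcal{O}_\tau)$. So the real content is the matching splitting on the Gr\"obner side,
\[
\trop_G^{\gr}(\overline{Y}) \cap \mathcal{V}_\tau = \trop_G^{\gr}(\overline{Y} \cap \mathcal{O}_\tau) \qquad \text{for every } \tau \preceq \sigma,
\]
after which Theorem \ref{fundthmspherical} applied to $\overline{Y} \cap \mathcal{O}_\tau$ inside $\mathcal{O}_\tau$ finishes the argument by reassembly.

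Fix a Borel subgroup $B$ and a colored face $\tau$, and take $v \in \mathcal{V}(X_B)$ in the stratum indexed by $\tau$, so that $v^{-1}(\mathbb{Q}) = \tau^\perp \cap (\sigma^\vee \cap \mathcal{M})$. The subset $\mathfrak{p}_v := k[X_B]_{v=\infty}$ is a $G$- and $B$-invariant prime ideal of $k[X_B]$, which I would first identify as the defining ideal of the closed subscheme $\overline{\mathcal{O}_\tau} \cap X_B \subseteq X_B$ using the orbit--face correspondence. Then $v$ descends to a finite $G$-invariant $\mathbb{Q}$-valuation on $k[X_B]/\mathfrak{p}_v$, which embeds into $k[(\mathcal{O}_\tau)_B]$ via restriction from $\overline{\mathcal{O}_\tau} \cap X_B$ to its open dense subset $(\mathcal{O}_\tau)_B$; let $\overline{v} \in \mathcal{V}((\mathcal{O}_\tau)_B)$ denote the resulting valuation. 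The inverse construction, lifting $\overline{v}$ back to a $v \in \mathcal{V}(X_B)$ with stratum $\tau$, invokes the extension result (\cite{Kn}, Corollary 1.5) in the same way valuations were extended in \textsection\ref{construction}.

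The core compatibility to establish is the decomposition
\[
\gr_v\bigl(k[X_B]\bigr) \;=\; \mathfrak{p}_v \;\oplus\; \gr_{\overline v}\bigl(k[(\mathcal{O}_\tau)_B]\bigr),
\]
matching the splitting in (\ref{extended grobner def}) into the $v=\infty$ summand and the rationally valued summands. Under this identification, for $f \in \overline{J}_B$ the initial form $\init_v(f)$ either lies in $\mathfrak{p}_v$ (when $v(f) = \infty$) or coincides with $\init_{\overline v}$ of the image of $f$ in $k[(\mathcal{O}_\tau)_B]$. Letting $(J_\tau)_B \subseteq k[(\mathcal{O}_\tau)_B]$ denote the ideal cutting out $\overline{Y} \cap (\mathcal{O}_\tau)_B$, this gives $\init_v(\overline{J}_B) = \gr_v(k[X_B])$ iff $\init_{\overline v}((J_\tau)_B) = \gr_{\overline v}(k[(\mathcal{O}_\tau)_B])$, and therefore $\mathcal{V}_B(\overline{J}_B) \cap \mathcal{V}_\tau = \mathcal{V}_B((J_\tau)_B)$; unioning over Borel subgroups yields the stratum-wise equality we need.

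The main obstacle will be the two structural statements that tie the extended data on $X_B$ to the non-extended data on $(\mathcal{O}_\tau)_B$: (i) the identification of the $v=\infty$ ideal $\mathfrak{p}_v$ with the vanishing ideal of $\overline{\mathcal{O}_\tau} \cap X_B$, a refinement of Theorem \ref{wheninGrob} adapted to extended valuations, and (ii) the compatibility of initial forms with the quotient-and-restrict operation that produces $(\overline{v}, (J_\tau)_B)$ from $(v, \overline{J}_B)$. Neither is deep, but both require a careful check to ensure that the claimed splitting of $\gr_v$ holds as graded algebras and that passing from $\overline{Y} \cap \mathcal{O}_\tau$ to its dense open subset $\overline{Y} \cap (\mathcal{O}_\tau)_B$ does not alter the Gr\"obner tropicalization.
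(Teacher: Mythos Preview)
Your proposal follows essentially the same route as the paper: stratify $\overline{\mathcal{N}}_\mathcal{V}(\sigma)$ by $G$-orbits and reduce stratum by stratum to Theorem~\ref{fundthmspherical} applied to $\overline{Y}\cap\mathcal{O}_\tau$ inside $\mathcal{O}_\tau$. The paper first treats the case $Y=G/H$ (so $\overline{J}_B=0$) to identify $\trop_G^{\gr}(X)$ with $\bigsqcup_\tau\mathcal{V}_\tau$, and then for general $Y$ simply invokes Theorem~\ref{fundthmspherical} on each orbit without further comment.

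Where you differ is in rigor rather than strategy. The paper's final sentence tacitly assumes the compatibility $\trop_G^{\gr}(\overline{Y})\cap\mathcal{V}_\tau=\trop_G^{\gr}(\overline{Y}\cap\mathcal{O}_\tau)$, whereas you unpack this by identifying $\mathfrak{p}_v=k[X_B]_{v=\infty}$ with the ideal of $\overline{\mathcal{O}_\tau}\cap X_B$ and exhibiting the splitting $\gr_v(k[X_B])\cong\mathfrak{p}_v\oplus\gr_{\overline v}(k[(\mathcal{O}_\tau)_B])$. That extra bookkeeping is exactly what is needed to justify the paper's last step, so your version is the more complete argument; the paper's brevity buys readability at the cost of leaving this compatibility to the reader. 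One small correction: $\mathfrak{p}_v$ is an ideal in the $B$-stable algebra $k[X_B]$, so you should call it $B$-invariant rather than $G$-invariant; the $G$-invariance of $v$ does not make sense on $k[X_B]$ directly.
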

\begin{proof}
We first show that $\trop_G(X) = \trop_G^{\gr}(X)$.
Let $\left( \sigma, \mathcal{F} \right)$ be the colored cone associated to $X$ and let $\mathcal{O}$ be the $G$-orbit associated to a colored face $\tau \preceq \sigma$.
If $B \leq G$ is a Borel subgroup, denote by $\mathcal{D}_\mathcal{O}(X) \subseteq \mathcal{D}(X)$ the set of $B$-stable divisors of $X$ that contain $\mathcal{O}$.
Write
\[
X_{B}(\mathcal{O}) := X \setminus \bigcup_{D \in \mathcal{D}(X)\setminus \mathcal{D}_\mathcal{O}(X)} D \subseteq X_{B}
\]
and note that $X_{B}(\mathcal{O}) = X_{B}$ when $\mathcal{O}$ is the unique closed orbit of $X$.
We can characterize the regular functions on $X_{B}(\mathcal{O})$ as we did in Theorem \ref{orbitregfunctions}:
\[
k\left[X_{B}(\mathcal{O})\right] = \set{f \in k\left[(G/H)_{B}\right] : \nu_D(f) \geq 0 \text{ for all } D \in\mathcal{D}_\mathcal{O}(X) }. 
\]
Now consider the following subset:
\[
\set{v: k\left[(G/H)_{B}\right] \rightarrow \overline{\mathbb{Q}} : k\left[X_{B}(\mathcal{O})\right] = v^{-1}(\mathbb{Q}) } \subseteq \mathcal{V}\left(X_{B}\right) \subseteq \overline{\mathcal{N}}_\mathcal{V}(\sigma).
\]
The condition that $k\left[ X_B(\mathcal{O}) \right] = v^{-1}(\mathbb{Q})$ is the same as the condition that $v$ satisfies $v^{-1}(\mathbb{Q}) = \tau^\perp \cap \left( \sigma^\vee \cap \mathcal{M} \right)$, with the additional requirement that  functions vanishing along the $B$-stable prime divisors aren't sent to $\infty$.
Taking the union over all Borel subgroups thus gives a copy of $\mathcal{V}(\mathcal{O}) \subseteq \Hom\left( \tau^\perp \cap \left(\sigma^\vee \cap \mathcal{M} \right) , \overline{\mathbb{Q}} \right)$ in $\trop_G^{\gr}(X)$.
This holds for all $G$-orbits $\mathcal{O}$, so $\trop_G^{\gr}(X) = \trop_G(X)$.

Now if $Y \subseteq G/H$ is a closed variety, suppose $\overline{Y} \cap \mathcal{O} \neq \emptyset$ for some $G$-orbit $\mathcal{O}$.
Then Theorem \ref{fundthmspherical} ensures that $\trop_G\left(\overline{Y} \cap \mathcal{O} \right) = \trop_G^{\gr}\left( \overline{Y} \cap \mathcal{O} \right)$ in $\mathcal{V}(\mathcal{O}) \subseteq \overline{\mathcal{N}}_\mathcal{V}(\sigma)$.
As this holds for every orbit $\mathcal{O}$, the statement follows.
\end{proof}

\begin{corollary}
Let $X$ be a $G/H$-embedding and $Y \subseteq G/H$ a closed subvariety. Then 
\[
\trop_G(\overline{Y}) \cong \trop_G^{\gr}(\overline{Y}),
\]
where the closure $\overline{Y}$ is taken in $X$.
\end{corollary}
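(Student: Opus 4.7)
The plan is to reduce to the simple case, which is exactly Theorem \ref{TheFinalThm}, and then verify that the gluing conventions used to define both $\trop_G$ and $\trop_G^{\gr}$ for non-simple embeddings are compatible. By definition, a $G/H$-embedding $X$ is covered by its simple sub-embeddings $X_i \subseteq X$, one for each maximal colored cone $(\sigma_i, \mathcal{F}_i)$ of the colored fan $\Sigma$ associated to $X$, with two simple pieces meeting along the union of the simple embeddings determined by their common colored faces. Both $\trop_G(\overline{Y})$ and $\trop_G^{\gr}(\overline{Y})$ are constructed by computing the tropicalization of $\overline{Y} \cap X_i$ inside each simple $X_i$ and then gluing along the tropicalizations of shared orbits.

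First, for each maximal colored cone I would apply Theorem \ref{TheFinalThm} to the simple embedding $X_i$ and the subvariety $\overline{Y} \cap X_i$, obtaining a canonical homeomorphism
\[
\Phi_i : \trop_G\left(\overline{Y} \cap X_i\right) \xrightarrow{\cong} \trop_G^{\gr}\left(\overline{Y} \cap X_i\right)
\]
as subspaces of $\overline{\mathcal{N}}_\mathcal{V}(\sigma_i)$. The content of Theorem \ref{TheFinalThm} is that on each $G$-orbit stratum the identification is given by matching $G$-invariant valuations with the corresponding semigroup homomorphisms $\sigma_i^\vee \cap \mathcal{M} \to \overline{\mathbb{Q}}$, i.e., the identification is orbit-by-orbit the same on both sides.

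Second, I would check that these identifications patch. If $(\sigma_i, \mathcal{F}_i)$ and $(\sigma_j, \mathcal{F}_j)$ are two maximal colored cones with common colored face $(\tau, \mathcal{F}')$, then the simple sub-embedding $X_\tau$ corresponding to $\tau$ sits inside both $X_i$ and $X_j$, and in both constructions the tropicalization of $\overline{Y} \cap X_\tau$ is realized as the subspace of $\trop_G(\overline{Y} \cap X_i)$, resp.\ $\trop_G^{\gr}(\overline{Y} \cap X_i)$, consisting of those valuations (resp.\ homomorphisms) whose support is $\tau^\perp \cap (\sigma_i^\vee \cap \mathcal{M})$. Since $\Phi_i$ and $\Phi_j$ both restrict on this subspace to the canonical identification supplied by Theorem \ref{TheFinalThm} applied to the simple embedding $X_\tau$, they agree on the overlap, and therefore glue to a global homeomorphism $\trop_G(\overline{Y}) \xrightarrow{\cong} \trop_G^{\gr}(\overline{Y})$.

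The main obstacle is the gluing compatibility in the second step: one must verify that the ambient identifications $\overline{\mathcal{N}}_\mathcal{V}(\sigma_i) \supseteq \mathcal{V}_\tau \subseteq \overline{\mathcal{N}}_\mathcal{V}(\sigma_j)$ used on the geometric side agree with the identifications of the corresponding $\gr_v$-constructions on the Gröbner side. This is essentially a naturality statement: the isomorphism between the two descriptions of $\mathcal{V}_\tau$ given in \textsection \ref{construction} is induced by the inclusion $\tau^\perp \cap \mathcal{M} \hookrightarrow \sigma_i^\vee \cap \mathcal{M}$, and on the Gröbner side the identification comes from restricting valuations on $k[(G/H)_B]$ that are finite on $k[(X_i)_B(\mathcal{O}_\tau)]$, which depend only on the orbit $\mathcal{O}_\tau$ and not on the choice of ambient simple embedding. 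Once this naturality is recorded, the corollary follows immediately.
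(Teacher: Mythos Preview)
Your proposal is correct and follows exactly the same approach as the paper: reduce to the simple case via Theorem~\ref{TheFinalThm} and then observe that both constructions glue simple pieces along shared orbits in the same way. The paper's proof is in fact just two sentences to this effect; your version simply spells out the gluing compatibility in more detail (one minor imprecision: the overlap $\trop_G(\overline{Y}\cap X_\tau)$ inside $\overline{\mathcal{N}}_\mathcal{V}(\sigma_i)$ consists of valuations supported on $\tau'^\perp\cap(\sigma_i^\vee\cap\mathcal{M})$ for \emph{all} colored faces $\tau'\preceq\tau$, not just $\tau$ itself, but this does not affect the argument).
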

\begin{proof}
Theorem \ref{TheFinalThm} proves this for simple embeddings. The gluing operation between tropicalizations of simple embeddings is identical for both constructions, so the result follows.
\end{proof}

\begin{bibdiv}
\begin{biblist}

\bib{Br91}{article}{
   author={Brion, Michel},
   title={Sur la g\'eom\'etrie des vari\'et\'es sph\'eriques},
   language={French},
   journal={Comment. Math. Helv.},
   volume={66},
   date={1991},
   number={2},
   pages={237--262},
   issn={0010-2571},
   %review={\MR{1107840}},
   %doi={10.1007/BF02566646},
}

\bib{CLS}{book}{
   author={Cox, David A.},
   author={Little, John B.},
   author={Schenck, Henry K.},
   title={Toric varieties},
   series={Graduate Studies in Mathematics},
   volume={124},
   publisher={American Mathematical Society, Providence, RI},
   date={2011},
   pages={xxiv+841},
   isbn={978-0-8218-4819-7},
   %review={\MR{2810322}},
   %doi={10.1090/gsm/124},
}

\bib{Ga}{article}{
   author={Gagliardi, Giuliano},
   title={The Cox ring of a spherical embedding},
   journal={J. Algebra},
   volume={397},
   date={2014},
   pages={548--569},
   issn={0021-8693},
   %review={\MR{3119238}},
}

\bib{Ha}{book}{
   author={Hartshorne, Robin},
   title={Algebraic geometry},
   note={Graduate Texts in Mathematics, No. 52},
   publisher={Springer-Verlag, New York-Heidelberg},
   date={1977},
   pages={xvi+496},
   %isbn={0-387-90244-9},
   %review={\MR{0463157}},
}

\bib{Ka}{article}{
   author={Kajiwara, Takeshi},
   title={Tropical toric geometry},
   conference={
      title={Toric topology},
   },
   book={
      series={Contemp. Math.},
      volume={460},
      publisher={Amer. Math. Soc., Providence, RI},
   },
   date={2008},
   pages={197--207},
   %review={\MR{2428356}},
   %doi={10.1090/conm/460/09018},
}

\bib{KM}{article}{
   author = {Kaveh, Kiumars},
   author = {Manon, Christopher},
    title = {Gr\"obner theory and tropical geometry on spherical varieties},
journal = {ArXiv e-prints},
archivePrefix = {"arXiv"},
   eprint = {arXiv:1611.01841},
 primaryClass = {"math.AG"},
 keywords = {Mathematics - Algebraic Geometry},
     year = {2016},
    month = {nov},
   adsurl = {https://arxiv.org/abs/1611.01841},
  adsnote = {Provided by the SAO/NASA Astrophysics Data System}
}

\bib{Kn}{article}{
   author={Knop, Friedrich},
   title={The Luna-Vust theory of spherical embeddings},
   conference={
      title={Proceedings of the Hyderabad Conference on Algebraic Groups
      (Hyderabad, 1989)},
   },
   book={
      publisher={Manoj Prakashan, Madras},
   },
   date={1991},
   pages={225--249},
   %review={\MR{1131314}},
}

\bib{LV}{article}{
   author={Luna, D.},
   author={Vust, Th.},
   title={Plongements d'espaces homog\`enes},
   language={French},
   journal={Comment. Math. Helv.},
   volume={58},
   date={1983},
   number={2},
   pages={186--245},
   issn={0010-2571},
   %review={\MR{705534}},
   %doi={10.1007/BF02564633},
}

\bib{MS}{book}{
   author={Maclagan, Diane},
   author={Sturmfels, Bernd},
   title={Introduction to tropical geometry},
   series={Graduate Studies in Mathematics},
   volume={161},
   publisher={American Mathematical Society, Providence, RI},
   date={2015},
   pages={xii+363},
   isbn={978-0-8218-5198-2},
   %review={\MR{3287221}},
}

\bib{Pas}{article}{
   author={Pasquier, Boris},
   title={Introduction to spherical varieties and description of special classes of spherical varieties},
   journal={lecture notes available at http://www.math.univ-montp2.fr/~pasquier/KIAS.pdf},
   volume={},
   date={2009},
   number={},
   pages={},
   issn={},
   %review={\MR{2511632}},
   %doi={10.4310/MRL.2009.v16.n3.a13},
}

\bib{Pay}{article}{
   author={Payne, Sam},
   title={Analytification is the limit of all tropicalizations},
   journal={Math. Res. Lett.},
   volume={16},
   date={2009},
   number={3},
   pages={543--556},
   issn={1073-2780},
   %review={\MR{2511632}},
   %doi={10.4310/MRL.2009.v16.n3.a13},
}

\bib{Pe}{article}{
   author={Perrin, Nicolas},
   title={On the geometry of spherical varieties},
   journal={Transform. Groups},
   volume={19},
   date={2014},
   number={1},
   pages={171--223},
   issn={1083-4362},
   %review={\MR{3177371}},
   %doi={10.1007/s00031-014-9254-0},
}

\bib{SS}{article}{
   author={Speyer, David},
   author={Sturmfels, Bernd},
   title={The tropical Grassmannian},
   journal={Adv. Geom.},
   volume={4},
   date={2004},
   number={3},
   pages={389--411},
   issn={1615-715X},
   %review={\MR{2071813}},
   %doi={10.1515/advg.2004.023},
}

\bib{Te}{article}{
   author={Tevelev, Jenia},
   title={Compactifications of subvarieties of tori},
   journal={Amer. J. Math.},
   volume={129},
   date={2007},
   number={4},
   pages={1087--1104},
   issn={0002-9327},
   %review={\MR{2343384}},
   %doi={10.1353/ajm.2007.0029},
}

\bib{Vo}{thesis}{
   author = {Vogiannou, Tassos},
    title = {Spherical Tropicalization},
    institution = {University of Massachusetts Amherst},
    type = {thesis},
journal = {ArXiv e-prints},
archivePrefix = {"arXiv"},
   eprint = {arXiv:1511.02203},
 primaryClass = {"math.AG"},
 keywords = {Mathematics - Algebraic Geometry},
     year = {2015},
    month = {nov},
   adsurl = {http://adsabs.harvard.edu/abs/2015arXiv151102203V},
  adsnote = {Provided by the SAO/NASA Astrophysics Data System}
}
\end{biblist}
\end{bibdiv}

\end{document}